\numberwithin{equation}{section}
\theoremstyle{plain}
\newtheorem{theorem}{Theorem}[section]
\newtheorem{lemma}[theorem]{Lemma}
\newtheorem{corollary}[theorem]{Corollary}
\theoremstyle{definition}
\newtheorem{definition}[theorem]{Definition}
\newtheorem{example}[theorem]{Example}
\theoremstyle{remark}
\newtheorem{case[theorem]}{Case}
\def \R{{\mathbb R}}
\def \Z{{\mathbb Z}}
\def \C{{\mathbb C}}
\def\H{{\mathbb H}}
\def\norm#1.#2.{\lVert#1\rVert_{#2}}
\def\R{\mathbb R}
\def \U{{\mathcal U}}
\def \H{{\mathcal H}}
\def \K{{\mathcal K}}
\def \HD{{\mathbb H^d}}
\def \HS{{\mathcal{HS}}}
\title{Positive weight function and classification of g-frames}
\author{Anirudha Poria}
\address{Department of Mathematics,
Indian Institute of Science,
Bengaluru 560012, Karnataka, India.}
\address{Department of Mathematics,
School of Engineering and Applied Sciences,
Bennett University, Greater Noida 201310, India.}
\email{anirudhap@iisc.ac.in}
\keywords{$g$-frames; $g$-Riesz bases; weight function; Heisenberg group; shift-invariant subspaces.}
\subjclass[2010]{Primary 42C15; Secondary 43A80.}
\date{\today}
\begin{document}
\maketitle
\begin{abstract} 
Given a positive weight function and an isometry map on a Hilbert spaces $\H$, we study a class of linear maps which  is a $g$-frame, $g$-Riesz basis and a $g$-orthonormal basis for  $\H$ with respect to $\C$ in terms of the weight function. We apply our results to study the frame for shift-invariant subspaces on the Heisenberg group. 
\end{abstract}    

\section{Introduction and preliminaries} 

A frame for a Hilbert space is a countable set of overcompleted  vectors such that each vector in the Hilbert space  can be represented in a non-unique way in terms of the frame elements. The redundancy and the flexibility in the representation of a Hilbert space vector by the frame elements make the frames a useful tool in mathematics as well as in  interdisciplinary fields such as sigma-delta quantization \cite{ben06}, neural networks \cite{can99}, image processing  \cite{can05}, system modelling \cite{dud98}, quantum measurements \cite{eld02}, sampling theory \cite{fei94}, wireless communications \cite{str03} and many other well known fields.

Given a Hilbert space $\mathcal H$, a countable family of vectors $\{x_j\}_{j\in J}\subset \mathcal H$  is called a {\it frame} for $\mathcal H$ if there are positive constants $A$ and $B$ such that for any $x\in \mathcal H$, 

$$A\|x\|^2 \leq \sum_{j\in J} |\langle x, x_j\rangle|^2 \leq B \|x\|^2 .$$

The frames were introduced for the first time by Duffin and Schaeffer \cite{duf52}, in the context of nonharmonic Fourier series \cite{you01}. The notion of a frame extended to g-frame by Sun \cite{sun06} in 2006 to generalize all the existing frames such as bounded quasi-projectors \cite{for04}, frames of subspaces \cite{cas04}, pseudo-frames \cite{li04}, oblique frames \cite{chr04}, outer frames \cite{ald04}, and time-frequency localization operators \cite{dor06}. Here, we recall the definition of a g-frame. 

\begin{definition}
Let $\mathcal H$ be a Hilbert space and $\{\mathcal K_j\}_{j\in J}$ be a countable family of Hilbert spaces with associated norm $\| \cdot\|_{\mathcal K_j}$.
A countable family  of linear and bounded operators $\{\Lambda_j: \H \to \K_j\}_{j \in J}$ is called a {\it $g$-frame} for $\mathcal H$ with respect to $\{\mathcal K_j\}_{j\in J}$ if there are two positive constants $A$ and $B$ such that for any $f\in \mathcal H$ we have 
\begin{equation}\label{eq01}
 A\|f\|_{\mathcal H}^2 \leq \sum_{j\in J} \|\Lambda_j(f)\|_{\mathcal K_j}^2 \leq B \|f\|_{\mathcal H}^2.
\end{equation}
\end{definition} 
 
The constants $A$ and $B$ are called {\it $g$-frame lower and upper bounds}, respectively. If $A=B=1$, then it is called a {\it Parseval $g$-frame}. For example, by the Riesz representation theorem, every $g$-frame is a frame if $\mathcal K_j= \Bbb C$ for all $j\in J$. And, every frame is a $g$-frame with respect to $\Bbb C$. If the right-hand side of (\ref{eq01}) holds, it is said to be a {\it  $g$-Bessel  sequence} with bound $B$. The family $\{\Lambda_j \}_{j \in J}$ is called {\it  $g$-complete}, if for any vector $f\in \mathcal H$ with $\Lambda_j(f)=0$ for $j \in J$, we have $f=0$. If $\{\Lambda_j \}_{j \in J}$ is $g$-complete and there are positive constants $A$ and $B$ such that for any finite subset $J_1 \subset J$ and $g_j \in \K_j, \; j \in J_1$,
\[ A \sum_{j \in J_1} \|g_j \|^2 \leq \bigg\|\sum_{j \in J_1} \Lambda_j^* (g_j) \bigg\|^2 \leq B \sum_{j \in J_1}\|g_j \|^2, \]
then $\{\Lambda_j \}_{j \in J}$ is called a {\it $g$-Riesz basis} for $\H$ with respect to $\{\K_j\}_{j \in J}$. Here, $\Lambda_j^*$ denotes the adjoint operator. We say $\{\Lambda_j \}_{j \in J}$ is a $g$-orthonormal basis for $\H$ with respect to $\{\K_j\}_{j \in J}$ if it satisfies the following:
\begin{align}\label{onb1}
\langle \Lambda^*_{i} g, \Lambda^*_{j}  h \rangle &= 0 \quad \forall g \in \K_i, \; h \in \K_j, i\neq j \\\label{onb2}
  \| \Lambda^*_{i} g\|^2 &=  \|g \|^2, \quad \forall i, \; g \in \K_i \\\label{onb3}
 \sum_{j \in J}\|\Lambda_jf \|^2 &=\|f \|^2, \quad \forall f \in \H. 
\end{align}

Before we  state the main results of this paper, let us consider the following example. For a function $\phi\in L^2(\Bbb R^d)$, $d\geq 1$ and $m, n\in \Bbb Z^d$, the modulation and translation of $\phi$ by multi-integers $m$ and $n$ are defined by 
 $$M_m\phi(x) = e^{2\pi i \langle m, x\rangle} \phi(x), \quad T_n\phi(x) =\phi(x-n) .$$
The Gabor (Weil-Heisenberg) system generated by $\phi$  is  
 $$\mathcal G(\phi):=\{M_m T_n\phi: \ m, n\in \Bbb Z^d\} .$$
 
It is well-known that the ``basis\rq\rq{} property of  the Gabor system $\mathcal G(\phi)$ for its spanned vector space can be studied by the Zak transform of $\phi$ 

$$Z\phi(x, \xi) = \sum_{k\in \Bbb Z^d} \phi(x+k) e^{2\pi i \langle \xi, k\rangle} .$$

For example, the Gabor system $\mathcal G(\phi)$ is a Riesz basis for the spanned vector space if and only if there are positive constants $A$ and $B$ such that $A\leq |Z\phi(x, \xi)|\leq B$  a.e. $(x,\xi)\in [0,1]^d\times [0,1]^d$ (\cite{HSWW}). The purpose of this paper is to show that the above result is a particular case  of a more general theory involving $g$-frames.
 
For the rest of the paper  we shall assume the following. $\Omega$ is a measurable set with measure $dx$. We assume that $\Omega$ has finite measure $|\Omega|$ and $|\Omega|=1$.   We let $w:\Omega \to (0,\infty)$be a  measurable map  with $\int_\Omega w(x)dx<\infty$.  Let  $\U$ be a Hilbert space over the field $\Bbb F$ ($\Bbb R$ or $\Bbb C$) with associated inner product $\langle \cdot , \cdot \rangle_{\U} $.   We denote by  $L^2_w(\Omega, \U)$  the weighted Hilbert space of all measurable functions $f:\Omega\to \U$ such that 
$$\|f\|_{L^2_w(\Omega, \U)}^2: = \int_\Omega \|f(x)\|_\U^2 w(x) dx<\infty .$$ 

The associated  inner product of any two functions $f, g$ in $L_w^2(\Omega, \U)$ is  then given by 
$$\langle f, g\rangle_{L_w^2(\Omega, \U)} = \int_\Omega \langle f(x), g(x)\rangle_\U w(x) dx. $$  

A countable family of unit vectors  $\{f_k\}_{k\in K}$ in ${L_w^2(\Omega, \U)}$ constitute  an ONB (orthonormal basis) for ${L_w^2(\Omega, \U)}$ with respect to the weight function $w$ if the family  is  orthogonal  and for any $g\in {L_w^2(\Omega, \U)}$ the Parseval identity holds: 
$$\| g \|^2 = \sum_{k\in K} \left|\langle g, f_k\rangle_{L_w^2(\Omega, \U)}\right|^2 =\sum_{k\in K}\left|
\int_\Omega \langle g(x), f_k(x)\rangle_{\U} w(x) dx\right|^2 .$$ 

To avoid any confusion, in the sequel, we shall use subscripts for all inner products and associated norms for Hilbert spaces, when necessary.
For the rest, we assume that $S: \mathcal H \to L^2_w(\Omega, \U)$ is a linear and unitary map. Thus for any $f\in \mathcal H$
$$ \|f\|^2 = \int_\Omega \|Sf(x)\|_{\U}^2 w(x) dx. $$
  
We fix an ONB $\{f_n\}_{n\in K}$ for $L^2(\Omega)$ and ONB $\{g_m\}_{m\in J}$ for the Hilbert space $\U$, and define 
$$G_{m,n}(x):  = f_n(x) g_m, \quad  \forall x\in \Omega, \  (m,n)\in K\times J.$$  
And, 
$$\tilde \Lambda_{(m,n)}(f)(x) = \langle  S(f)(x), G_{(m,n)}(x) \rangle_{\U}  \quad \forall \  f\in \mathcal H,   x\in \Omega .$$

Our main results are the following. 
\begin{theorem}\label{th1} 
Let $\{f_n\}_{n\in K}\subset  L^2(\Omega)$, $\{g_m\}_{m\in J} \subset \U$ and $\{\tilde \Lambda_{m,n}\}$ be as in above.  Assume that $|f_n(x)|=1$ for a.e. $x\in \Omega$.  
Then the following hold: 
\begin{itemize}
\item [(a)]  $\{\tilde\Lambda_{(m,n)}\}_m $  is a Parseval $g$-frame for $\mathcal H$.   Thus $\{\Lambda_{(m,n)}\}_m$ is a Bessel sequence. 
\item [(b)] For any ${(m,n)}$,  the linear map $\Lambda_{m,n}: \mathcal H \to \C$ defined  by
$$\Lambda_{m,n}(f) =\int_\Omega \tilde\Lambda_{m,n}(f)(x) \  w(x) dx $$
is well-defined. And,  $\{\Lambda_{m,n}\}$   is a frame for $\H$   if and only if there are positive finite constants $A$ and $B$ such that  $A\leq w(x)\leq B$ for a.e. $x\in \Omega$.
\item [(c)] The family $\{\Lambda_{m,n}\}$ is a Riesz basis for $\H$ if and only if there are positive finite constants $A$ and $B$ such that $A\leq w(x)\leq B$ for a.e. $x\in \Omega$. 
\end{itemize} 
\end{theorem}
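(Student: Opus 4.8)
My plan is to run all three parts off a single computation, exploiting the two orthonormal bases $\{g_m\}$ in $\U$ and $\{f_n\}$ in the \emph{unweighted} $L^2(\Omega)$, together with the unitarity of $S$. Everywhere I start from the pointwise formula $\tilde\Lambda_{m,n}(f)(x)=\overline{f_n(x)}\,\langle Sf(x),g_m\rangle_\U$. For (a) I fix $n$ and take the natural target $L^2_w(\Omega)$, so that
\[
\|\tilde\Lambda_{m,n}(f)\|^2=\int_\Omega |f_n(x)|^2\,|\langle Sf(x),g_m\rangle_\U|^2\,w(x)\,dx .
\]
Using the hypothesis $|f_n|\equiv 1$, summing over $m$, interchanging sum and integral by Tonelli (all terms nonnegative), and applying Parseval for the ONB $\{g_m\}$ to collapse $\sum_m|\langle Sf(x),g_m\rangle_\U|^2=\|Sf(x)\|_\U^2$, the unitarity of $S$ gives $\sum_m\|\tilde\Lambda_{m,n}(f)\|^2=\int_\Omega\|Sf(x)\|_\U^2 w(x)\,dx=\|f\|^2$, which is the Parseval $g$-frame identity. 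The Bessel claim is then a one-line consequence: Cauchy--Schwarz in $L^2_w$ yields $|\Lambda_{m,n}(f)|^2\le \|\tilde\Lambda_{m,n}(f)\|^2\int_\Omega w\,dx$, and summing over $m$ bounds $\sum_m|\Lambda_{m,n}(f)|^2$ by $\big(\int_\Omega w\,dx\big)\|f\|^2$.

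Part (b) is the heart of the matter. Writing $h_m(x)=\langle Sf(x),g_m\rangle_\U$, I recognize $\Lambda_{m,n}(f)=\int_\Omega h_m(x)w(x)\overline{f_n(x)}\,dx=\langle h_m w,f_n\rangle_{L^2(\Omega)}$, i.e.\ the $n$-th Fourier coefficient of $h_m w$ against the \emph{unweighted} ONB $\{f_n\}$. Well-definedness follows from Cauchy--Schwarz, $|\Lambda_{m,n}(f)|\le\big(\int_\Omega|h_m|^2w\big)^{1/2}\big(\int_\Omega w\big)^{1/2}\le\|f\|\big(\int_\Omega w\big)^{1/2}$. The key identity is
\[
\sum_{m,n}|\Lambda_{m,n}(f)|^2=\int_\Omega\|Sf(x)\|_\U^2\,w(x)^2\,dx ,
\]
obtained by Parseval for $\{f_n\}$ in $L^2(\Omega)$ (sum over $n$) and then Parseval for $\{g_m\}$ in $\U$ (sum over $m$); it is precisely the mismatch between the weighted target and the unweighted basis that produces the extra factor $w$ and hence $w^2$. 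Granting this, ``$A\le w\le B\Rightarrow$ frame'' is immediate, since $Aw\le w^2\le Bw$ pointwise turns the identity into $A\|f\|^2\le\sum|\Lambda_{m,n}(f)|^2\le B\|f\|^2$ (and $w\le B$ guarantees $h_m w\in L^2(\Omega)$, legitimizing the Parseval step). For the converse I would use that the unitary $S$ is onto $L^2_w(\Omega,\U)$, so I may feed in test functions $F=\mathbf 1_E\,u$ with $u\in\U$ a unit vector and $E$ of positive measure inside $\{w\le N\}$; the identity reads $\int_E w^2=\sum|\Lambda_{m,n}(f)|^2$ while $\|f\|^2=\int_E w$, and the frame bounds $A'\int_E w\le\int_E w^2\le B'\int_E w$ for all such $E$ force $A'\le w\le B'$ a.e.\ by choosing $E$ inside $\{w>B'\}$, respectively $\{w<A'\}$, to reach a contradiction.

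Part (c) then follows by combining (b) with a soft structural step. Since $S$ is unitary and the Riesz-representing vectors are $\lambda_{m,n}=S^{-1}G_{m,n}$, the family $\{\Lambda_{m,n}\}$ is a Riesz basis for $\H$ exactly when $\{G_{m,n}\}=\{f_n g_m\}$ is a Riesz basis for $L^2_w(\Omega,\U)$. Now $\{f_n g_m\}$ is an \emph{orthonormal} basis of the \emph{unweighted} $L^2(\Omega,\U)$. If $A\le w\le B$, the weighted and unweighted norms are equivalent, so the identity map is a bounded invertible operator carrying this ONB to a Riesz basis of $L^2_w(\Omega,\U)$; pulling back by $S$ gives the Riesz basis property. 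Conversely, a Riesz basis is in particular a frame, so part (b) returns $A\le w\le B$ with no extra work.

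The step I expect to be the main obstacle is the rigorous justification of the displayed identity in (b) when $w$ is unbounded: the Parseval sum over $n$ presupposes $h_m w\in L^2(\Omega)$, which can fail. I would handle this by truncating to $E_N=\{w\le N\}$, where $h_m w\,\mathbf 1_{E_N}\in L^2(\Omega)$, applying Parseval there, and passing to the limit (monotone convergence on the right, Fatou on the left); but for the actual argument this is almost entirely avoidable, since the forward direction uses $w\le B$ and the converse only needs bounded test functions supported on $\{w\le N\}$, where the identity is exact.
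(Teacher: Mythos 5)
Your proposal is correct, and for parts (a) and (b) it is essentially the paper's own argument: the paper rests on the very same identity $\sum_{m,n}|\Lambda_{m,n}(f)|^2=\int_\Omega\|Sf(x)\|_{\U}^2\,w(x)^2\,dx$, obtained there in one stroke from the fact that $\{G_{m,n}\}=\{f_ng_m\}$ is an ONB of the \emph{unweighted} $L^2(\Omega,\U)$ (its Lemma \ref{mixed orthonormal bases}), where you instead apply Parseval in $n$ and then in $m$; and its converse likewise tests the identity on $S^{-1}(\mathbf 1_E e_0)$ for $E\subset\{w<A\}$. Your attention to when the Parseval step in $n$ is legitimate (i.e.\ $h_mw\in L^2(\Omega)$, truncation to $\{w\le N\}$) is a genuine refinement: the paper applies the identity to $\chi_E$ without comment, which is harmless for the lower-bound contradiction but really does need your truncation for the upper-bound direction when $w$ is unbounded. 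In part (c) you genuinely depart from the paper, to your advantage. The paper's forward direction computes with finite sequences and asserts $\int_\Omega\bigl\|\sum_{m,n}c_{m,n}f_n(x)g_m\bigr\|_{\U}^2w(x)\,dx=\int_\Omega\sum_{m,n}|c_{m,n}|^2w(x)\,dx$ ``by orthogonality of $g_m$'', an equality that is false pointwise (the cross terms in $n$ do not vanish before integration); the valid argument must bound $w$ by $B$ (resp.\ $A$) first and only then use orthonormality of $\{f_ng_m\}$ in unweighted $L^2(\Omega,\U)$, which is exactly what your norm-equivalence argument encodes: under $A\le w\le B$ the identity map $L^2(\Omega,\U)\to L^2_w(\Omega,\U)$ is bounded and invertible, so the ONB $\{G_{m,n}\}$ becomes a Riesz basis of $L^2_w(\Omega,\U)$, and the unitary $S^{-1}$ transports it to $\H$. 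For the converse, the paper expands ${\bf 1}_E$ in the ONB and feeds the resulting (generally not finitely supported) coefficients into the finite-sequence Riesz inequality, an extension to $\ell^2$ sequences it never justifies; your route --- a Riesz basis is in particular a frame, then quote the converse of (b) --- avoids this entirely. The one thing you should make explicit is that ``Riesz basis implies frame'' is an imported standard fact (with $\K_j=\C$ the paper's $g$-Riesz definition reduces to the classical one, so Christensen's book or Sun's $g$-frame paper covers it), not something established in the paper itself.
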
  

\begin{corollary}\label{main result}  
Let $\{\lambda_k\}_{k\in J}$ be an orthonormal basis (or a Parseval frame) for $L^2(\Omega)$ such that $|\lambda_k(x)|=1$ for all $x\in \Omega$. Assume that $S: \mathcal H \to L^2_w(\Omega)$ is a  unitary map. Then the sequence of operators $\{\Lambda_k\}_{k\in J}$ defined by 
$$\Lambda_k(f) =\int_\Omega  S(f)(x) \overline{ \lambda_k(x) } w(x) dx $$
is a frame for $\H$   if and only if there are positive finite constants $A$ and $B$ such that  $A\leq w(x)\leq B$ for a.e. $x\in \Omega$. 
\end{corollary}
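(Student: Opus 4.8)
\emph{The plan is to} read this Corollary as the scalar instance $\U=\C$ of Theorem~\ref{th1}(b): there the $\U$-ONB $\{g_m\}$ collapses to the single vector $1\in\C$ and the family $\{\lambda_k\}$ plays the role of $\{f_n\}$. The only extra generality is that $\{\lambda_k\}$ is permitted to be a Parseval frame rather than an ONB, and the argument below accommodates this because it invokes $\{\lambda_k\}$ only through the Parseval identity $\sum_k|\langle h,\lambda_k\rangle_{L^2(\Omega)}|^2=\|h\|_{L^2(\Omega)}^2$, which holds equally for ONBs and Parseval frames. First I would recast $\Lambda_k$ as a weighted inner product. Since $|\lambda_k(x)|=1$ and $\int_\Omega w\,dx<\infty$, each $\lambda_k\in L^2_w(\Omega)$, and for $f\in\H$
$$\Lambda_k(f)=\int_\Omega S(f)(x)\,\overline{\lambda_k(x)}\,w(x)\,dx=\langle S(f),\lambda_k\rangle_{L^2_w(\Omega)}.$$
Writing the integrand as $[S(f)\,w]\,\overline{\lambda_k}$, Cauchy--Schwarz in $L^2_w(\Omega)$ gives $\int_\Omega|S(f)|\,w\,dx\le\|S(f)\|_{L^2_w(\Omega)}\,(\int_\Omega w)^{1/2}<\infty$, so $S(f)\,w\in L^1(\Omega)$; as $\lambda_k$ is bounded, $\Lambda_k(f)$ is well defined, and the frame sum equals $\sum_k|\langle S(f)\,w,\lambda_k\rangle_{L^2(\Omega)}|^2$.

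Next, whenever $S(f)\,w\in L^2(\Omega)$ the Parseval property of $\{\lambda_k\}$ yields
$$\sum_{k}|\Lambda_k(f)|^2=\|S(f)\,w\|_{L^2(\Omega)}^2=\int_\Omega|S(f)(x)|^2\,w(x)^2\,dx,$$
while unitarity of $S$ gives $\|f\|_\H^2=\|S(f)\|_{L^2_w(\Omega)}^2=\int_\Omega|S(f)|^2\,w\,dx$. Because $S$ is onto, $g:=S(f)$ ranges over all of $L^2_w(\Omega)$, so the two-sided frame inequality is equivalent to
$$A\int_\Omega|g|^2\,w\,dx\ \le\ \int_\Omega|g|^2\,w^2\,dx\ \le\ B\int_\Omega|g|^2\,w\,dx.$$
This is a multiplier statement for the function $w$ that I would settle in both directions.

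For the ``if'' direction, the bound $A\le w\le B$ forces $w$ to be bounded, hence $S(f)\,w\in L^2(\Omega)$ for every $f$; the displayed inequality then holds pointwise ($A\,|g|^2w\le|g|^2w^2\le B\,|g|^2w$) and integrates to the frame bounds. For the ``only if'' direction I would avoid any trouble from unbounded $w$ by testing only against $g=\mathbf{1}_E$ with $E\subseteq\{w\le M\}$ of positive finite measure, for which $g\,w\in L^2(\Omega)$ and the identity above is unconditionally valid; this gives $A\int_E w\le\int_E w^2\le B\int_E w$. If $w>B$ on a set of positive measure, intersecting it with some $\{w\le M\}$ yields such an $E$ on which $w^2>B\,w$ pointwise, so $\int_E w^2>B\int_E w$, a contradiction; testing on $E\subseteq\{w<A\}$ rules out $w<A$ analogously. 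Hence $A\le w\le B$ a.e.

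The hard part will be precisely this interchange of summation with the Parseval identity when $w$ is unbounded, for then $S(f)\,w$ need not lie in $L^2(\Omega)$ and $\sum_k|\Lambda_k(f)|^2$ may be infinite. Restricting the test functions to superlevel/sublevel sets on which $w$ is bounded sidesteps the difficulty, applying Parseval only where it is legitimate; once the pointwise bounds are extracted in this way, the globally bounded case supplies the converse with no further analytic subtlety.
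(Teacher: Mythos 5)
Your proposal is correct and follows essentially the same route as the paper: both convert the frame sum via the Parseval identity into $\int_\Omega |S(f)(x)|^2 w(x)^2\,dx$, compare it against $\|f\|^2=\int_\Omega |S(f)(x)|^2 w(x)\,dx$ using the unitarity (in particular surjectivity) of $S$, and prove the converse by testing against preimages under $S$ of indicator functions of sets where the bounds on $w$ fail. Your one refinement --- restricting test sets to sublevel sets $\{x : w(x)\le M\}$ so that $1_E\,w\in L^2(\Omega)$ and Parseval is legitimately applicable --- is worth noting: the paper treats only the lower-bound case explicitly (where $w<A$ on $E$ makes square-integrability of $1_E\,w$ automatic) and dispatches the upper bound with ``follows similarly,'' which is precisely where an unbounded $w$ would make the naive Parseval step questionable, so your truncation supplies the detail the paper leaves implicit.
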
 

\begin{theorem}\label{th3} 
Let $\{f_n\}_{n\in K}\subset  L^2(\Omega)$, $\{g_m\}_{m\in J} \subset \U$ and $\{\tilde \Lambda_{m,n}\}$ be as in above. Assume that $|f_n(x)|=1$ for a.e. $x\in \Omega$. The family $\{\Lambda_{m,n}\}$ is an ONB for $\H$ if and only if $w(x)=1$ for a.e. $x\in \Omega$. 
\end{theorem}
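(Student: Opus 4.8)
The plan is to translate the $g$-orthonormal basis condition on $\{\Lambda_{m,n}\}$ into an orthonormal basis condition for $\{G_{m,n}\}$ in $L^2_w(\Omega,\U)$, and then to pull the weight out through a multiplication operator. Since each $\K_{m,n}=\C$, every $\Lambda_{m,n}$ is a bounded linear functional, so by the Riesz representation theorem there is a unique $\eta_{m,n}\in\H$ with $\Lambda_{m,n}(f)=\langle f,\eta_{m,n}\rangle_{\H}$ and $\Lambda_{m,n}^{*}(c)=c\,\eta_{m,n}$. Reading off the definitions \eqref{onb1}--\eqref{onb3}: conditions \eqref{onb1} and \eqref{onb2} say exactly that $\{\eta_{m,n}\}$ is orthonormal, while \eqref{onb3} is the Parseval identity; hence $\{\Lambda_{m,n}\}$ is a $g$-orthonormal basis if and only if $\{\eta_{m,n}\}$ is an orthonormal basis of $\H$. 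Because, using that $S$ is unitary, $\Lambda_{m,n}(f)=\int_\Omega\langle Sf(x),G_{m,n}(x)\rangle_{\U}\,w(x)\,dx=\langle Sf,G_{m,n}\rangle_{L^2_w(\Omega,\U)}=\langle f,S^{-1}G_{m,n}\rangle_{\H}$, we have $\eta_{m,n}=S^{-1}G_{m,n}$, and unitarity of $S$ makes this equivalent to $\{G_{m,n}\}$ being an orthonormal basis of $L^2_w(\Omega,\U)$.

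Next I would remove the weight. The map $V\colon L^2_w(\Omega,\U)\to L^2(\Omega,\U)$, $V\psi=w^{1/2}\psi$, is unitary (here $w>0$ is used), so $\{G_{m,n}\}$ is an orthonormal basis of $L^2_w(\Omega,\U)$ if and only if $\{VG_{m,n}\}=\{w^{1/2}f_n\,g_m\}$ is one of $L^2(\Omega,\U)$. Using the factorization $L^2(\Omega,\U)\cong L^2(\Omega)\otimes\U$ together with the fact that $\{g_m\}$ is a fixed orthonormal basis of $\U$, this holds if and only if $\{w^{1/2}f_n\}_{n}$ is an orthonormal basis of $L^2(\Omega)$. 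This reduces the whole theorem to the scalar statement: given the orthonormal basis $\{f_n\}$ of $L^2(\Omega)$ with $|f_n|=1$ a.e., the system $\{w^{1/2}f_n\}$ is an orthonormal basis of $L^2(\Omega)$ if and only if $w=1$ a.e. The direction $(\Leftarrow)$ is immediate, since $w=1$ a.e. makes $\{w^{1/2}f_n\}=\{f_n\}$, which is already an orthonormal basis; retracing the reductions then yields the $g$-orthonormal basis.

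For the converse, orthonormality of $\{w^{1/2}f_n\}$ together with that of $\{f_n\}$ in $L^2(\Omega)$ gives $\int_\Omega f_n\overline{f_{n'}}\,(w-1)\,dx=0$ for all $n,n'$ (the diagonal terms use $|f_n|^2=1$ and force $\int_\Omega w=1=|\Omega|$). Fixing $n'$ and setting $\phi:=\overline{f_{n'}}(w-1)$, this reads $\langle\phi,\overline{f_n}\rangle_{L^2(\Omega)}=0$ for every $n$; since $\{\overline{f_n}\}$ is again an orthonormal basis of $L^2(\Omega)$, one gets $\phi=0$, and hence $w=1$ a.e.---provided $\phi\in L^2(\Omega)$, i.e. $w-1\in L^2(\Omega)$. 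This integrability is the main obstacle: a priori $w$ is only assumed to lie in $L^1(\Omega)$, so the completeness argument cannot be closed directly. I would resolve it by observing that a $g$-orthonormal basis is in particular a Parseval $g$-frame (by \eqref{onb3}), hence a $g$-frame; Theorem~\ref{th1}(b) then supplies constants $0<A\le B<\infty$ with $A\le w\le B$ a.e., so that $w-1\in L^\infty(\Omega)\subset L^2(\Omega)$ (as $|\Omega|=1$), and the argument closes to give $w=1$ a.e.
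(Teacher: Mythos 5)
Your proof is correct, but it takes a genuinely different route from the paper's. The paper argues directly on both sides: for the forward direction it verifies \eqref{onb1}--\eqref{onb3} by computation (unitarity of $S$ plus orthonormality of $\{f_ng_m\}$ in the unweighted space, together with the identities \eqref{eq1}--\eqref{equ2}); for the converse it never touches the orthogonality relations at all, but tests the Parseval identity \eqref{onb3} against the single function $\phi_E=S^{-1}(1_E e_0)$, where $E$ is a positive-measure set on which $w<1$ (or, symmetrically, $w>1$), obtaining the strict inequality
\begin{equation*}
\sum_{m,n}|\Lambda_{m,n}(\phi_E)|^2=\int_E w(x)^2\,dx<\int_E w(x)\,dx=\|\phi_E\|^2 .
\end{equation*}
That choice of test function is what lets the paper handle a merely integrable $w$ with no boundedness input, and it in fact proves the stronger statement that \eqref{onb3} alone forces $w=1$ a.e. Your proof instead compresses the theorem into a chain of unitary equivalences --- $\{\Lambda_{m,n}\}$ is a $g$-ONB iff $\{S^{-1}G_{m,n}\}$ is an ONB of $\H$ iff $\{G_{m,n}\}$ is an ONB of $L^2_w(\Omega,\U)$ iff $\{w^{1/2}f_ng_m\}$ is an ONB of $L^2(\Omega,\U)$ iff $\{w^{1/2}f_n\}$ is an ONB of $L^2(\Omega)$ --- and then settles the scalar statement from the relations $\int_\Omega f_n\overline{f_{n'}}(w-1)\,dx=0$ and completeness of $\{f_n\}$. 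You correctly identified the genuine obstacle on this route ($w-1$ is a priori only in $L^1(\Omega)$, while the completeness argument needs $L^2$) and patched it legitimately by invoking Theorem \ref{th1}(b), whose proof is independent of Theorem \ref{th3}, so there is no circularity; this is exactly the kind of gap that would otherwise sink the argument. The trade-off: the paper's proof is self-contained and uses weaker hypotheses in the converse, while yours buys a transparent structural picture (the theorem is precisely the statement that $\{w^{1/2}f_n\}$ is an ONB of $L^2(\Omega)$ iff $w\equiv 1$) at the cost of importing the essential boundedness of $w$ from Theorem \ref{th1}(b). One small remark: your tensor-factorization step (removing the fixed ONB $\{g_m\}$ to pass from $\{w^{1/2}f_ng_m\}$ to $\{w^{1/2}f_n\}$) is the mirror image of the paper's Lemma \ref{mixed orthonormal bases} and deserves the same short orthogonality-plus-completeness argument rather than a bare appeal to $L^2(\Omega,\U)\cong L^2(\Omega)\otimes\U$.
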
  

\section{Proof of Theorem \ref{th1}} 
First we prove the following lemmas which we  need for the proof of Theorem \ref{th1}. 
 
\begin{lemma}\label{technical lemma}
Let $\{f_n\}$ be an ONB for the weighted Hilbert space $L_w^2(\Omega)$. Let $\{g_m\}$ be an ONB for a Hilbert space $\U$. Define $G_{m,n}(x) = f_n(x) g_m$. Then the family $\{G_{m,n}\}_{J\times K}$ is an ONB for $L_w^2(\Omega, \U)$. 
\end{lemma}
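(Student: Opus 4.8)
The plan is to verify the two defining properties of an orthonormal basis separately: orthonormality of the family $\{G_{m,n}\}$, and completeness (that its closed span is all of $L_w^2(\Omega,\U)$). For orthonormality I would compute the inner product directly,
\[
\langle G_{m,n}, G_{m',n'}\rangle_{L_w^2(\Omega,\U)} = \int_\Omega \langle f_n(x)\, g_m,\ f_{n'}(x)\, g_{m'}\rangle_\U\, w(x)\, dx .
\]
Pulling the scalars $f_n(x)$ and $\overline{f_{n'}(x)}$ out of the $\U$-inner product and invoking $\langle g_m, g_{m'}\rangle_\U = \delta_{mm'}$ collapses this to $\delta_{mm'}\langle f_n, f_{n'}\rangle_{L_w^2(\Omega)} = \delta_{mm'}\delta_{nn'}$, where the final equality uses that $\{f_n\}$ is an ONB for the weighted space $L_w^2(\Omega)$. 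This step is a routine manipulation and presents no difficulty.

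For completeness, since the family is already orthonormal it suffices to show that its orthogonal complement is trivial. I would take an arbitrary $F \in L_w^2(\Omega,\U)$ satisfying $\langle F, G_{m,n}\rangle = 0$ for all $(m,n)$ and aim to conclude $F = 0$ a.e. Setting $c_m(x) := \langle F(x), g_m\rangle_\U$, the Cauchy--Schwarz bound $|c_m(x)| \le \|F(x)\|_\U$ shows $c_m \in L_w^2(\Omega)$, and the orthogonality hypothesis rewrites as $\langle c_m, f_n\rangle_{L_w^2(\Omega)} = 0$ for every $n$. Because $\{f_n\}$ is an ONB for $L_w^2(\Omega)$, each $c_m$ vanishes $w\,dx$-a.e., hence Lebesgue-a.e. since $w>0$ on $\Omega$.

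The main (though mild) obstacle is the passage from ``$\langle F(x), g_m\rangle_\U = 0$ a.e. for each fixed $m$'' to ``$F(x) = 0$ a.e.'', because each vanishing statement holds only off an $m$-dependent null set. Here I would exploit that the index set $J$ is countable: the union $N$ of these countably many null sets is again null, and off $N$ we have $\langle F(x), g_m\rangle_\U = 0$ simultaneously for every $m$. As $\{g_m\}$ is an ONB for $\U$, this forces $F(x) = 0$ for every $x \notin N$, so $F = 0$ in $L_w^2(\Omega,\U)$. Thus the orthogonal complement is trivial, and together with the orthonormality computation this shows $\{G_{m,n}\}_{J\times K}$ is an ONB, completing the proof.
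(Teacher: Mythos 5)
Your proof is correct and follows essentially the same route as the paper, which establishes the statement via a slightly more general lemma (for an arbitrary measure space, stated as a biconditional in $\{g_m\}$) using the identical two steps: the factorization of the inner product to get orthonormality, and completeness via the slice functions $x \mapsto \langle F(x), g_m\rangle_\U$, which are annihilated first by completeness of $\{f_n\}$ in $L_w^2(\Omega)$ and then pointwise by completeness of $\{g_m\}$ in $\U$. If anything, you are more careful than the paper in handling the passage from the $m$-dependent null sets to a single null set via countability of the index set, and in noting that $w>0$ makes $w\,dx$-a.e.\ vanishing equivalent to Lebesgue-a.e.\ vanishing.
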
 

In order to prove the lemma, we shall recall the following result from \cite{iosevich-mayeli-14}  and prove it here for the sake of completeness.  

\begin{lemma}\label{mixed orthonormal bases} 
Let $(X,\mu)$ be a measurable space,  and $\{f_n\}_n$ be an orthonormal basis for $L^2(X):=L^2(X, d\mu)$. Let $Y$ be a Hilbert space and $\{g_m\}_m$ be a  family in  $Y$. For any $m, n$ and $x\in X$ define $G_{m,n}(x) := f_n(x) g_m$. Then $\{G_{m,n}\}_{m,n}$ is an orthonormal basis for the Hilbert space $L^2(X,Y,d\mu)$ if and if $\{g_m\}_m$ is an orthonormal basis for $Y$. 
\end{lemma}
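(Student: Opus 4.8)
The plan is to reduce everything to the single scalar identity
$$\langle G_{m,n}, G_{m',n'}\rangle_{L^2(X,Y,d\mu)} = \langle g_m, g_{m'}\rangle_Y \, \langle f_n, f_{n'}\rangle_{L^2(X)} = \langle g_m, g_{m'}\rangle_Y\, \delta_{n,n'},$$
which follows by pulling the scalars $f_n(x)\overline{f_{n'}(x)}$ out of the $Y$-inner product and using that $\{f_n\}$ is orthonormal in $L^2(X)$. This one computation already settles the orthonormality part of both implications: taking $n=n'$ shows that $\{G_{m,n}\}$ is orthonormal if and only if $\langle g_m,g_{m'}\rangle_Y=\delta_{m,m'}$, i.e.\ if and only if $\{g_m\}$ is orthonormal.

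For the completeness I would pass to the Parseval identity. First I would fix $F\in L^2(X,Y,d\mu)$ and introduce the coefficient functions $c_m(x):=\langle F(x),g_m\rangle_Y$; by Cauchy--Schwarz $|c_m(x)|\le \|F(x)\|_Y$, so each $c_m$ lies in $L^2(X)$. The key factorization is
$$\langle F, G_{m,n}\rangle = \int_X \overline{f_n(x)}\,\langle F(x),g_m\rangle_Y\,d\mu(x) = \langle c_m, f_n\rangle_{L^2(X)}.$$
Summing over $n$ and applying Parseval for the ONB $\{f_n\}$ gives $\sum_n |\langle F,G_{m,n}\rangle|^2 = \|c_m\|_{L^2(X)}^2 = \int_X |\langle F(x),g_m\rangle_Y|^2\,d\mu(x)$. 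Summing over $m$ and interchanging sum and integral by Tonelli yields
$$\sum_{m,n} |\langle F, G_{m,n}\rangle|^2 = \int_X \Big(\sum_m |\langle F(x),g_m\rangle_Y|^2\Big)\,d\mu(x).$$
If $\{g_m\}$ is an ONB for $Y$, the inner sum equals $\|F(x)\|_Y^2$ for a.e.\ $x$ by Parseval in $Y$, and the right-hand side collapses to $\|F\|^2_{L^2(X,Y,d\mu)}$; thus $\{G_{m,n}\}$ satisfies Parseval and, being orthonormal, is an ONB. This proves the ``if'' direction.

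For the converse I would argue by contraposition: if $\{g_m\}$ fails to be complete in $Y$, pick a unit vector $y\perp g_m$ for all $m$ and a nonzero $\phi\in L^2(X)$, and test against the elementary tensor $F(x):=\phi(x)\,y$. Then $\langle F,G_{m,n}\rangle = \langle y,g_m\rangle_Y\langle\phi,f_n\rangle_{L^2(X)}=0$ for every $m,n$, while $F\neq 0$, so $\{G_{m,n}\}$ cannot be complete. Combined with the orthonormality equivalence above, this gives the ``only if'' direction. The main technical point to watch is the Tonelli interchange together with the a.e.\ pointwise application of Parseval in $Y$: I must confirm that $x\mapsto \langle F(x),g_m\rangle_Y$ is measurable and that the double sum of nonnegative terms genuinely equals the iterated integral, which is exactly where the finiteness $\|F\|^2<\infty$ and the nonnegativity of the summands are used.
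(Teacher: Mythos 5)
Your proof is correct, but it routes the two key steps differently from the paper, so a comparison is worthwhile. The orthogonality computation and the coefficient functions $c_m(x)=\langle F(x),g_m\rangle_Y$ are exactly the paper's starting points (the paper calls them $A_n$). For the ``if'' direction, however, the paper uses the orthogonal-complement criterion for completeness: it assumes $\langle F,G_{m,n}\rangle=0$ for all $m,n$, deduces $c_m=0$ in $L^2(X)$ from completeness of $\{f_n\}$, and then $F(x)=0$ a.e.\ from completeness of $\{g_m\}$; you instead verify the Parseval identity for arbitrary $F$, using Parseval for $\{f_n\}$, a Tonelli interchange, and pointwise Parseval for $\{g_m\}$ in $Y$. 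Your route produces the Parseval identity explicitly --- which is in fact what the paper invokes downstream, e.g.\ in the proof of Theorem \ref{th1}(b) --- at the cost of the measure-theoretic interchange you correctly flag; the paper's route avoids Tonelli but silently relies on taking a countable union of the null sets on which each $c_m$ vanishes. For the ``only if'' direction the two arguments are essentially contrapositives of one another built on the same tensor: the paper takes $g\perp g_m$ for all $m$, expands $B_n(x)=f_n(x)g$ in the assumed ONB $\{G_{m,n}\}$ to conclude $B_n=0$ and hence $g=0$, whereas you exhibit $F(x)=\phi(x)y$ as a nonzero vector orthogonal to every $G_{m,n}$, contradicting completeness directly. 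Your version is slightly more economical, since it needs only the inner-product factorization and not the series expansion of $B_n$; both are valid, provided (as you implicitly do) one normalizes a vector in the nonzero orthogonal complement of $\overline{\mathrm{span}}\{g_m\}$ and takes, say, $\phi=f_1$ to ensure $F\neq 0$.
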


\begin{proof}  
For any $m, n$ and $m\rq{}, n\rq{}$ we have  
\begin{align}\label{orthogonality-relation}
\langle G_{m,n}, G_{m\rq{},n\rq{}} \rangle &= \int_X \langle f_m(x) g_n, f_{m\rq{}}(x) g_{n\rq{}} \rangle_Y \  d\mu(x)\\\notag
& = \langle f_m,f_{m\rq{}}\rangle_{L^2(X)} \langle g_n, g_{n\rq{}}\rangle_Y \\\notag
&= \delta_{m,m\rq{}}  \langle g_n, g_{n\rq{}}\rangle_Y.
\end{align}
 
This shows that the orthogonality of  $\{G_{m,n}\}_{m,n}$  is equivalent to the orthogonality of $\{g_m\}_m$. And,  $\|G_{m,n}\| =1$ if and only if $\|g_n\|=1$.  
  
Let $\{g_m\}_m$ be an orthonormal basis for $Y$. To prove the completeness of  $\{G_{m,n}\}$ in $L^2(X,Y,d\mu)$, let $F\in  L^2(X,Y,d\mu)$ such that $\langle F, G_{m,n}\rangle =0$, $\forall \ m, n$. We claim $F=0$. By the definition of the inner product we have 
\begin{align}\label{inner-product}
0=\langle F, G_{m,n}\rangle  &=\int_X \langle F(x), G_{m,n}(x)\rangle_Y d\mu(x)\\\notag 
&= \int_X \langle F(x), f_m(x) g_n\rangle_Y d\mu(x) \\\notag
&= \int_X \langle F(x), g_n\rangle_Y \overline{f_m(x)} d\mu(x) \\\notag
&= \langle A_n, f_m\rangle
\end{align}
where  
$$A_n: X\to \C; \ \  x \mapsto \langle F(x), g_n\rangle_Y. $$

$A_n$ is a measurable function and lies in $L^2(X)$ with $\|A_n\|\leq  \|F\|$. Since $\langle A_n, f_m\rangle_{L^2(X)}=0$ for all $m$, then  $A_n=0$ by the completeness of $\{f_m\}$.  On the other hand, by  the definition of $A_n$  we have $\langle F(x), g_n\rangle_{Y}=0$ for a.e. $x\in X$. Since $\{g_n\}$ is complete in $Y$, then $F(x)=0$ for a.e. $x\in X$. This proves the claim. 
   
Conversely, assume that $\{G_{m,n}\}_{m,n}$ is an orthonormal basis for the Hilbert space $ L^2(X,Y,d\mu)$. Therefore by (\ref{orthogonality-relation}), $\{g_m\}$ is an  orthonormal set. We prove that if for $g\in Y$ and $\langle g, g_m\rangle=0$ for all $m$, then $g$ must be identical to zero. For this, for any $n$ define the map 
$$B_n: X\to Y; \ \ x\mapsto f_n(x)g.$$
Then $B_n$ is measurable and it belongs to $ L^2(X,Y,d\mu)$ and  $\|B_n\|  = \|g\|_Y$. Thus  
\begin{align}
B_n&=\sum_{n\rq{},m} \langle B_n, G_{m,n\rq{}}\rangle_{L^2(X,Y,d\mu)} G_{m,n\rq{}}\\\notag
&= \sum_{n\rq{},m} \langle f_n, f_{n\rq{}}\rangle_{L^2(X)} \langle g, g_m\rangle_Y G_{m,n\rq{}}\\\notag
&= \sum_{m}   \langle g, g_m\rangle_Y G_{n,m}.
\end{align}
By the assumption that $ \langle g, g_m\rangle_Y=0$ for all $m$,  we get $B_n=0$. This implies that $B_n(x)= f_n(x) g=0$ for a.e. $x$. Since, $f_n\neq 0$, then $g$ must be a zero vector, and hence we are done. 
\end{proof}

\begin{lemma}\label{boundedness} 
$\tilde \Lambda_{(m,n)}: \mathcal H\to L^2_w(\Omega)$ is a bounded operator and $\|\tilde \Lambda_{(m,n)}(f)\|_{L^2_w(\Omega)}\leq \|f\|$. 
\end{lemma}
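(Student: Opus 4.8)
The plan is to reduce the whole statement to a single pointwise estimate on $\Omega$ and then integrate against the weight $w$, invoking the unitarity of $S$ only at the very last step. First I would unwind the definition of $\tilde\Lambda_{(m,n)}$ using conjugate-linearity of $\langle\cdot,\cdot\rangle_\U$ in its second slot. Since $G_{(m,n)}(x)=f_n(x)g_m$, this gives
\[
\tilde\Lambda_{(m,n)}(f)(x)=\langle S(f)(x),f_n(x)g_m\rangle_\U=\overline{f_n(x)}\,\langle S(f)(x),g_m\rangle_\U .
\]
Taking absolute values and invoking the hypothesis $|f_n(x)|=1$ for a.e.\ $x\in\Omega$, the scalar factor $\overline{f_n(x)}$ is annihilated and we obtain $|\tilde\Lambda_{(m,n)}(f)(x)|^2=|\langle S(f)(x),g_m\rangle_\U|^2$ for a.e.\ $x$.

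Next I would dominate the right-hand side pointwise. Because $\{g_m\}_{m\in J}$ is an orthonormal basis for $\U$, Bessel's inequality (equivalently Cauchy--Schwarz together with $\|g_m\|_\U=1$) yields $|\langle S(f)(x),g_m\rangle_\U|^2\leq \|S(f)(x)\|_\U^2$ for a.e.\ $x$. This estimate simultaneously shows that $\tilde\Lambda_{(m,n)}(f)$ is measurable and that its modulus is controlled by $\|S(f)(\cdot)\|_\U$, which is exactly what is needed to place it in $L^2_w(\Omega)$.

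Finally I would integrate the pointwise inequality against $w$ and use the unitarity of $S$:
\[
\|\tilde\Lambda_{(m,n)}(f)\|_{L^2_w(\Omega)}^2=\int_\Omega |\langle S(f)(x),g_m\rangle_\U|^2\,w(x)\,dx\leq \int_\Omega \|S(f)(x)\|_\U^2\,w(x)\,dx=\|S(f)\|_{L^2_w(\Omega,\U)}^2=\|f\|_{\mathcal H}^2 .
\]
This proves in one stroke both that $\tilde\Lambda_{(m,n)}(f)\in L^2_w(\Omega)$, so the map is well-defined into that space, and that $\|\tilde\Lambda_{(m,n)}(f)\|_{L^2_w(\Omega)}\leq\|f\|$, so $\tilde\Lambda_{(m,n)}$ is bounded with operator norm at most $1$.

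I do not expect any serious obstacle: the claim is essentially a one-line energy estimate. The only points requiring a little care are the bookkeeping with conjugate-linearity (ensuring the factor $f_n(x)$ appears as $\overline{f_n(x)}$ and is killed by the assumption $|f_n(x)|=1$), and the justification that the pointwise bound genuinely delivers $L^2_w$-membership rather than a merely formal inequality. Both are immediate once the orthonormality of $\{g_m\}$ and the unitarity of $S$ are brought to bear.
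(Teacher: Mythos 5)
Your proof is correct and takes essentially the same route as the paper's: unwind the definition of $\tilde\Lambda_{(m,n)}$, use $|f_n(x)|=1$ to drop the scalar factor, apply Cauchy--Schwarz with $\|g_m\|_\U=1$, and integrate against $w$ using the unitarity of $S$. The only difference is that you make explicit the conjugate-linearity bookkeeping (the factor $\overline{f_n(x)}$) that the paper passes over silently.
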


\begin{proof}
Let $f\in \H$. Then  for any $m\in J$ and $n\in K$,
\begin{align*}
\| \tilde\Lambda_{m,n}(f)\|_{L^2_w(\Omega)}^2 &= \int_\Omega |\tilde\Lambda_{m,n}(f)(x)|^2 w(x) dx \\
 &= \int_\Omega |\langle Sf(x), f_n(x)g_m\rangle_{\U}|^2 w(x) dx\\
 &= \int_\Omega |\langle Sf(x), g_m\rangle_{\U}|^2 w(x) dx .
\end{align*}
Using the Cauchy--Schwartz inequality in the preceding line, we get 
\begin{align*}
\| \tilde\Lambda_{m,n}(f)\|_{L^2_w(\Omega)}^2 &\leq  \int_\Omega \|Sf(x)\|^2 w(x) dx =  \|f\|^2 . 
\end{align*} 
\end{proof} 

Here, we first calculate the adjoint of $\tilde\Lambda_{m,n}$: $S$ is a unitary map. Then for any $f\in \H$ and $h\in L_w^2(\Omega,\U)$ we have 
\begin{align} 
\int_\Omega \langle Sf(x), h(x)\rangle_{\U} w(x)dx= \langle Sf, h\rangle_{L_w^2(\Omega,\U)}  =\langle f, S^{-1}h\rangle .
\end{align} 
Therefore for any $\phi\in L_w^2(\Omega)$ we get 
\begin{align}\label{adjoint}
 \langle \tilde \Lambda_{m,n} f, \phi\rangle = \langle f, S^{-1}((f_n \phi)g_m)\rangle ,
\end{align} 
where $(f_n \phi)g_m \in L_w^2(\Omega, \U)$ and $(f_n \phi)g_m(x) = f_n(x)\phi(x) g_m$. The relation (\ref{adjoint}) indicates that 
$$\tilde \Lambda_{m,n}^*(\phi) = S^{-1}((f_n \phi)g_m).$$ 
Notice, for any $f\in \H$, 
$$\Lambda_{m,n} f = \langle f, S^{-1}(f_ng_m)\rangle_{\H} .$$
Thus $\Lambda_{m,n}^*: \C \to \H$ is given by $c\to cS^{-1}(f_ng_m)$.

\begin{proof}[Proof of Theorem \ref{th1}]
$(a)$: Observe that $|\tilde \Lambda_{m,n}(f)(x)|\leq \| Sf(x)\|$ and $S$ is an isometry map. For any $f \in \H$ and $n\in K$  we have
\begin{align*}
\sum_m \| \tilde \Lambda_{m,n} (f)\|^2_{L^2_w(\Omega)} & =\sum_m \int_{\Omega} | \tilde \Lambda_{m,n} (f)(x)|^2 w(x) dx \\
& = \sum_m \int_{\Omega} | \langle G_{m,n}(x) , S(f)(x)\rangle_{\U} |^2 w(x) dx \\ 
& = \int_{\Omega} \sum_m| \langle G_{m,n}(x) , S(f)(x)\rangle_{\U} |^2 w(x) dx \\
& = \int_{\Omega} \sum_m| \langle g_m , w^{1/2}S(f)(x)\rangle_{\U} |^2   dx. 
\end{align*}
By the assumptions of the theorem, for   a.e. $x\in \Omega$, the sequence    $\{g_m\}_m$ is an ONB for $\U$. Invoking this along the isometry property of $S$ in  the last equation above, we get 
\begin{align}\label{parseval-proprty}
\sum_m \| \tilde \Lambda_{m,n} (f)\|^2_{L^2_w(\Omega)}   
 = \int_{\Omega} \| S(f)(x) \|^2_{\U} \; w(x) dx  
 = \| f \|^2.
\end{align}
Therefore, $\{\tilde \Lambda_{m,n}\}_m$ is a Parseval $g$-frame for  $\H$ with respect to $L^2_w(\Omega)$. To prove that $\{\Lambda_{m,n}\}_m$ is a Bessel sequence, note that by the H{\"o}lder's inequality for  in the weighted Hilbert space $L^2_w(\Omega)$  we can write 
\begin{align*}
|\Lambda_{m,n}(f)| \leq \int_\Omega |\tilde \Lambda_{m,n} (f)(x)| w(x) dx & \leq \bigg( \int_\Omega |\tilde \Lambda_{m,n} (f)(x)|^2 w(x) dx \bigg)^{\frac{1}{2}}   \bigg( \int_\Omega w(x) dx \bigg)^{\frac{1}{2}}.  
\end{align*}
By Lemma \ref{boundedness}, the first integral on the right is finite. Therefore by summing the square of the terms over $m$ we get 
\[ \sum_{m \in J} |\Lambda_{m,n}(f)|^2 \leq  C \sum_{m \in J} \int_\Omega |\tilde \Lambda_{m,n} (f)(x)  |^2 w(x) dx =C \sum_{m \in J} \| \tilde \Lambda_{m,n} (f) \|_{L^2_w(\Omega)}^2 =C\| f \|^2, \]
where $C:= \int_\Omega w(x) dx$ is a non-zero constant. Thus $\{\Lambda_{m,n}\}_{m \in J}$ is a  Bessel sequence for $\H$ with bound $C$. Notice, in the last equality we used (\ref{parseval-proprty}).

$(b)$ The map $\Lambda_{m,n}:\H \to \C$ is linear, well-defined and bounded. Indeed, for any $f\in \H$,
\begin{align*}
\int_\Omega |\tilde \Lambda_{m,n} (f)(x)| w(x) dx &\leq \left(\int_\Omega w(x) dx\right)^{1/2}  \left(\int_\Omega\|Sf(x)\|^2 w(x) dx\right)^{1/2} \\
&= \|f\| \left(\int_\Omega w(x) dx\right)^{1/2}.   
\end{align*}
Assume that $A\leq w(x)\leq B$ for almost every $x\in \Omega$. Let  $f\in \mathcal H$. Then 
\begin{align}\notag
\sum_{m,n} |\Lambda_{m,n}(f)|^2 &= \sum_{m,n} \left|  \int_\Omega \langle G_{m,n}(x) , S(f)(x)\rangle_{\U} w(x) dx\right|^2 \\\notag 
&=  \sum_{m,n}  \left|  \int_\Omega \langle G_{m,n}(x) , S(f)(x)w(x)\rangle_{\U} dx\right|^2  \\\label{eq1}
&=\sum_{m,n} \left| \langle  G_{m,n}  , S(f)w\rangle_{L^2(\Omega, \U)}\right|^2 .
\end{align} 
Since the countable family  $\{ G_{m,n}\}_{m,n}$ is an ONB for $L^2(\Omega, \U)$. Thus   
\begin{align}\notag 
(\ref{eq1})&=  \Vert S(f) w \Vert^2_{L^2(\Omega, \U)} \\\label{equ2}
& =  \int_\Omega \Vert S(f)(x)\Vert^2_{\U} \ w(x)^2  dx. 
\end{align}
By invoking the assumption that $w(x)\leq B$ for a.e. $x\in \Omega$ in (\ref{equ2}) we obtain 
\begin{align}\notag
(\ref{equ2})
 \leq B \int_\Omega \Vert S(f)(x)\Vert^2_{\U} \ w(x)  dx  
 =  B \|S(f)\|^2_{L^2_w(\Omega, \U)}= B \| f\|^2. 
\end{align} 
This proves that the sequence $\{\Lambda_{m,n}\}_{m,n}$ is a Bessel sequence for $\H$. An analogues argument also proves  the  frame  lower bound condition for $\{\Lambda_{m,n}\}_{m,n}$.

For the converse, assume that $\{\Lambda_{m,n}\}_{m,n}$ is a frame for $\mathcal H$ with the frame bounds $0<A\leq B<\infty$. Therefore for any $f\in \mathcal H$  
$$A\|f\|^2 \leq \sum_{m,n} |\Lambda_{m,n}(f)|^2 \leq B \|f\|^2.$$
Assume that there is a set $E\subset \Omega$ with positive measure such that $w(x)<A$ for all $x\in E$. We will prove that  there exits a function in $\mathcal H$ for which  the lower frame condition dose not hold. To this end, let $e_0 \in \U$ be a unit vector and let   $\vec{0}$ denote the zero vector in $\U$. Define $\chi_E(x):= 1_E(x) e_0$. By the assumption, $w \in L^1(E)$. Thus $\chi_E \in L^2_w(\Omega, \U)$. $S$ is  unitary, therefore  there is a function $\phi_E\in \H$ such that $S(\phi_E)=\chi_E$, and we have 
\begin{align}\label{iso}
\| S(\phi_E) \|_{L^2_w(\Omega, \U)}= \|\phi_E \|_{\H}=\| \chi_E \|_{L^2_w(\Omega, \U)}.  
\end{align}
On the other hand, the sequence $\{G_{m,n}\}_{m,n}$ is an ONB for $L^2(\Omega, \U)$. Thus
\begin{align*}
\sum_{m,n} |\Lambda_{m,n}(\phi_E)|^2 &= \sum_{m,n} \left| \langle G_{m,n}, S(\phi_E)w \rangle_{L^2(\Omega, \U)} \right|^2 \\
&=  \Vert \chi_E w  \Vert^2_{L^2(\Omega, \U)} \\
& =  \int_\Omega \Vert \chi_E(x)  \Vert^2_{\U} \ w(x)^2  dx \\
&< A \int_\Omega \Vert \chi_E(x) \Vert^2_{\U} \ w(x)  dx \\
& = A  \|\chi_E\|^2_{L_w^2(\Omega,\U)}  \\
&= A \| \phi_E \|^2_{\H} \hspace{1in}  \text{by \  (\ref{iso})}. 
\end{align*}
The preceding calculation shows that the lower frame bound condition fails for $\phi_E$. This contradicts our assumption that $\{\Lambda_{m,n}\}_{m,n}$ is a frame for $\H$, therefore $w(x)\geq A$ a.e. $x\in \Omega$. The argument for the upper bound for $w$ follows similarly. 

$(c)$ Assume that $A\leq w(x)\leq B$ for almost every $x\in \Omega$. Let $\{c_{m,n}\}_{m,n}$ be any finite sequence in $\C$. Then 
\begin{align*} 
 \left\| \sum_{m,n} \Lambda_{m,n}^*(c_{m,n}) \right\|_\H^2 &= \left\| \sum_{m,n}  c_{m,n}  S^{-1}(f_ng_m) \right\|_\H^2 \\
 & = \left\| S^{-1}\left(\sum_{m,n}  c_{m,n}   f_ng_m\right) \right\|_\H^2 \\
 & = \left\|   \sum_{m,n}  c_{m,n}   f_ng_m\right\|_{L^2_w(\Omega,\U)}^2  & \text{($S$ is unitary)}\\
 &= \int_\Omega  \left\|   \sum_{m,n}  c_{m,n}   f_n(x)g_m\right\|_{\U}^2 w(x) dx\\
 & = \int_\Omega  \sum_{m,n}  |c_{m,n}|^2     w(x) dx    & \text{(by orthogonality of $g_m$)}\\
 &\leq B   \sum_{m,n}  |c_{m,n}|^2  &\text{(since $w(x)\leq B$ a.e. $x\in \Omega$).}
\end{align*}
We also have 
\begin{align*} 
\left\| \sum_{m,n} \Lambda_{m,n}^*(c_{m,n}) \right\|_\H^2 & = 
\int_\Omega  \sum_{m,n}  |c_{m,n}|^2 w(x) dx  \geq   A  \sum_{m,n}  |c_{m,n}|^2  &\text{(since $w(x) \geq A$ a.e. $x\in \Omega$).}
\end{align*}
These show that $\{\Lambda_{m,n}\}_{K\times J}$ is a Riesz basis for $\H$ with lower and upper Riesz bounds $A$ and $B$, respectively.
 
Now assume that $\{\Lambda_{m,n}\}_{K\times J}$ is a Riesz basis for $\H$ with Riesz bounds $A$ and $B$. Therefore, for any sequence $\{c_{m,n}\}_{m,n}$ the inequalities hold: 
\begin{align}\label{Riesz inequality}
 A\sum_{m,n}  |c_{m,n}|^2  \leq  \left\| \sum_{m,n}\Lambda_{m,n}^*(c_{m,n})\right\|^2 \leq B \sum_{m,n}  |c_{m,n}|^2. 
\end{align}
 
We show that there are positive constants $A$ and $B$ such that $A\leq w(x)\leq B$ for a.e. $x\in \Omega$. In contrary, without loss of generality, we assume then there is a measurable subset $E\subset \Omega$ with positive measure such that $w(x)<A$ for all $x\in E$. Let $e$ be any unitary vector in the Hilbert space $\U$ and define the function ${\bf 1}_E(x) = e$ if $x\in E$ and otherwise ${\bf 1}_E(x)=0$. It is clear that ${\bf 1}_E\in L^2(\Omega, \U)$. Thus, there are coefficients $\{c_{m,n}\}_{K\times J}$ such that ${\bf 1}_E= \sum_{m,n} c_{m,n} f_n g_m$ and $\|{\bf 1}_E\|^2= \sum_{m,n}|c_{m,n}|^2$.  Then $\|{\bf 1}_E(x)\|=1$ for all $x\in E$ and we get
\begin{align*}
\left\| \sum_{m,n}\Lambda_{m,n}^*(c_{m,n})\right\|^2 &=   \left\| \sum_{m,n}  c_{m,n} f_ng_m\right\|_{L^2_w(\Omega,\U)}^2 \\
&= \huge\int_\Omega  \left\| \sum_{m,n} c_{m,n} f_n(x)g_m\right\|_{\U}^2 w(x) dx\\
&= \huge\int_\Omega \left\| {\bf 1}_E(x)\right\|_{\U}^2 w(x) dx\\
&= \huge\int_E  w(x) dx\\
&\leq A \int_\Omega \left\| {\bf 1}_E(x)\right\|_{\U}^2 dx \quad \text{(by the assumption $w(x)<A$ for all  $x\in E$)}\\
&=  A \int_\Omega \left\| \sum_{m,n}  c_{m,n} f_n(x)g_m\right\|_{\U}^2 dx \\
&=  A \int_\Omega \sum_{m,n} |c_{m,n}|^2 dx \\
&=  A  \sum_{m,n} |c_{m,n}|^2 . 
\end{align*}
This is contrary to the lower bound condition in (\ref{Riesz inequality}).
\end{proof} 

\section{Proof of Corollary \ref{main result}} 

\begin{proof} 
Assume that $A\leq w(x)\leq B$ for almost every $x\in \Omega$. Let $f\in \mathcal H$. Then 
\begin{align}\label{first line} 
\sum_{k \in J} |\Lambda_k(f)|^2 &= \sum_{k \in J} \left| \int_\Omega  S(f)(x) \overline{\lambda_k(x)} w(x)  dx\right|^2. 
\end{align} 
By the fact that $\{\lambda_k\}_{k \in J}$ is an orthonormal basis for $L^2(\Omega)$, using the Plancherel\rq{}s theorem we continue as follows: 
\begin{align*} 
(\ref{first line})&=  \int_\Omega |S(f)(x) w(x)|^2 dx 
\leq B \int_\Omega |S(f)(x)|^2 w(x) dx 
= B \|f\|^2. 
\end{align*} 
The frame boundedness from below by $A$ also follows with a similar calculation. 

For the converse, we shall use a contradiction argument. Assume that $\{\Lambda_k\}_{k \in J}$ is a frame for $\mathcal H$ with the frame bounds $0<A\leq B<\infty$. Therefore for any $f\in \mathcal H$ we have 
$$A\|f\|^2 \leq \sum_{k \in J} |\Lambda_k(f)|^2 \leq B \|f\|^2.$$

Assume that $E\subset \Omega$ be a measurable set with positive measure such that $w(x)< A$ for all $x\in E$. We shall show that the lower bound condition for the frame $\{\Lambda_k\}_{k \in J}$ must then fail for the lower bound $A$.

By the assumptions, $w \in L^1(E)$. Thus $1_E\in L^2_w(\Omega)$. Since $S$ is an onto map, assume that $\phi_E$ is the pre-image of $1_E$ in $\mathcal H$. Therefore  $S(\phi_E) = 1_E$ and $\|\phi_E\|_\mathcal H = \|1_E\|_{L^2_w(\Omega)}$ and we have 
\begin{align}\label{E1}
\sum_{k \in J} |\Lambda_k(\phi_E)|^2 = \sum_{k \in J} \left|\int_\Omega  S(\phi_E)(x) \overline{\lambda_k(x)} w(x)dx \right|^2
= \int_\Omega |1_E(x)|^2 |w(x)|^2 dx
= \int_E |w(x)|^2 dx.   
\end{align}
Since $w(x)<A$ for all $x\in E$, then from the last integral we obtain the following: 
\begin{align*}
(\ref{E1}) \leq A \int_E w(x) dx  
= A  \int_\Omega  |1_E(x)|^2 w(x) dx  
= A\int_\Omega |S(\phi_E)(x)|^2 w(x) dx  
= A\|\phi_E\|^2.
\end{align*} 
The preceding calculation shows that the frame lower bound condition fails for $\phi_E$. This contradicts our assumption that $\{\Lambda_k\}_{k \in J}$ is a frame, therefore $w(x)\geq A$ a.e. $x\in \Omega$. The argument for the upper bound follows similarly.
\end{proof} 

\section{Proof of Theorem \ref{th3}} 
\begin{proof} 
Assume that $w(x)=1$ a.e. $x\in \Omega$. By the equations (\ref{eq1}) and (\ref{equ2}), for any $f\in \H$ we have 
\begin{align}\notag
\sum_{m,n} |\Lambda_{m,n}(f)|^2 
&=\sum_{m,n} \left| \langle  G_{m,n} , S(f)\rangle_{L^2(\Omega, \U)}\right|^2 = \| Sf\|^2 = \|f\|^2 .
\end{align} 
This proves (\ref{onb3}). Let $c_1, c_2\in \C$. For any $m,m\rq{}\in J$ and $n, n\rq{}\in K$ we have 
 \begin{align*} 
 \langle \Lambda_{m,n}^*(c_1),\Lambda_{m\rq{},n\rq{}}^*(c_2)\rangle &= c_1\overline{c_2} \langle S^{-1}(f_ng_m),  S^{-1}(f_{n\rq{}}g_{m\rq{}})\rangle_{\H} \\
 & = c_1\overline{c_2}  \langle f_ng_m,  f_{n\rq{}}g_{m\rq{}}\rangle_{L^2(\Omega, \U)} \\
 &=  c_1\overline{c_2}  \delta_{m,m\rq{}}\delta_{n,n\rq{}}.
 \end{align*}
This proves the relations (\ref{onb1}) and (\ref{onb2}).  
 
To prove the converse, we  assume contrary.  We assume that there is a subset $E\subset \Omega$ of positive measure for which $w(x)<1$. As in the proof of Theorem \ref{th1}, one can show the existence of a function $\phi_E$ for which  with an analogous calculation following the relation (\ref{iso}) the following holds: 
$$\sum_{m,n} |\Lambda_{m,n}(\phi_E)|^2  \leq \|\phi_E\|^2 .$$
This indicates that the relation (\ref{onb3}) does not hold for $\phi_E$, which contradicts the assumption. 
\end{proof} 

\section{Examples} 
\begin{example}\label{example 1}  
Let  $\Omega=D$ be a  fundamental domain  in $\Bbb R^d$ with Lebesgue measure one. Assume that $\Gamma=M\Bbb Z^d$  where $M$ is an $d\times d$ invertible matrix and the exponentials $\{e_n(x):= e^{2\pi i \langle n,x\rangle}: \ n\in \Gamma\}$ form an orthonormal basis for $L^2(D)$. 

For $0\neq\phi\in L^2(\Bbb R^d)$, define  $\H:= \overline{\text{span}\{\phi(\cdot-n): n\in  \Bbb Z^d\}}$ and  the weight function $w$ by $w(x) := \sum_{n\in \Gamma^\perp} |\hat\phi(x+n)|^2$, a.e. $x\in D$. We claim that $\int_D w(x) dx= \|\phi\|^2$, thus is finite. To this end, notice   $D$ is a fundamental domain for the lattice $\Gamma$. By a result by Fuglede \cite{Fug74}, $D$ tiles $\Bbb R^d$ by  the dual  lattice $\Gamma^\perp=M^{-t}\Bbb Z^d$, $M^{-t}$ the inverse transpose of $M$. Therefore, we have 
\begin{align*}
\int_D w(x) dx &= \int_D \sum_{n\in \Gamma^\perp} |\hat\phi(x+n)|^2 dx  
= \int_{\R^d} |\hat\phi(x)|^2 dx 
= \|\hat \phi\|^2  
= \|\phi\|^2 .
\end{align*}
Let $E_w:= \{x\in D: w(x)>0\}$ and for any $f\in \H$ define 
$$S(f)(x):= 1_{E_w}(x)  {w(x)}^{-1} \sum_{n\in \Gamma^\perp} \hat f(x+n) \overline{\hat \phi(x+n)} \quad   \text{a.e.}\   x\in E_w.$$
Then $S$ is an unitary map from  $\H$ onto the weighted Hilbert space $L_w^2(0,1)$ with $\U=\C$. Note that    $Sf(x)= 0$ a.e. $x\in  D\setminus E_w$ (Theorem 3.1 (i) \cite{HSWW}).

For $k\in \Gamma$ define 
$$\Lambda_k(f):= \int_{E_w} \left(\sum_{n\in \Gamma^\perp} \hat f(x+n)\overline{\hat\phi(x+n)}\right) e^{-2\pi i  \langle x, k\rangle } dx.$$ 
By Corollary \ref{main result}, the operators $\{\Lambda_k\}_{k\in \Gamma}$  constitute a frame for $\H$ if there are positive constants $A$ and $B$ such that $A\leq \sum_{n\in \Gamma^\perp} |\hat\phi(x+n)|^2\leq B$ a.e. $x\in E$. By the well-known periodization method, it is obvious that $\Lambda_k(f) = \langle T_k \phi, f\rangle$ for any $f\in \H$ and  $k\in \Gamma$, with $T_k\phi(x)= \phi(x-k)$. Thus, $\{\Lambda_k\}_\Gamma$ is the translation family $\{T_k \phi\}_\Gamma$. 

For example, if  $\phi\in L^2(\R)$  such that  $\hat\phi= 1_{[0,1]}$, the indicator function of the unit interval, then the inequalities for $w$ holds for $A=1$ and $B=2$, hence   $\{\Lambda_k\}_{k\in \Gamma}$ is a frame with lower and upper frame bounds $1$ and $2$, respectively. 
\end{example} 

\section{Application: Frames for shift-invariant subspaces on the Heisenberg group}
 
In this section we shall revisit the example  of a function in $L^2(\Bbb H^d)$ that was introduced in \cite{BHM14} and    exploit our current  results  to study the frame and Riesz   property of the lattice  translations of the function for a shift-invariant subspace of $L^2(\Bbb H^d)$. 
 
\subsection{The Heisenberg group} 
The $d$-dimensional Heisenberg group $\Bbb H^d$ is identified with $\R^d \times \R^d \times \R$ and the noncommutative group law is  given by
\begin{equation}\label{equ:Hlaw}
(p,q,t) (p',q',t') = (p + p', q + q', t + t' + p\cdot q').
\end{equation}
The inverse  of an element is given by $(p,q,t)^{-1}= (-p,-q, -t+p\cdot q)$. Here, $x\cdot y$ is the inner product of two vectors in $\R^d$. The Haar measure of the group is the Lebesgue measure on $\Bbb R^{2d+1}$.

The class of non-zero measure  irreducible representations of $\Bbb H^d$ is identified by non-zero elements $\lambda \in \R^*:=\R \setminus \{0\}$ (see \cite{F1995}). Indeed, for any  $\lambda\neq 0$, the associated irreducible representation $\rho_\lambda$ of the Heisenberg group  is equivalent to Schr\"odinger representation into the class of unitary operators on $L^2(\Bbb R^d)$, such that for any $(p,q,t)\in \Bbb H^d$ and $f\in L^2(\Bbb R^d)$ 
\begin{align}\label{definition-of-schroedinger-representation}
\rho_\lambda(p,q,t)f(x) = e^{2\pi i t \lambda} e^{-2\pi i \lambda \langle q\cdot x\rangle} f(x-p) . 
\end{align}

Notice $\rho_\lambda(p,q,0)f(x) = M_{\lambda q} T_p f(x)$ is the unitary frequency-translation operator, where $M_x$ and $T_y$ are the modulation and translation operators, respectively. For $\varphi\in L^2(\Bbb H^d)$, we denote by $\hat \varphi$ the operator valued Fourier transform  of $\varphi$  which is defined by 
\begin{equation}\label{equ:Hfourier}
\hat \varphi(\lambda) = \int_\HD \varphi(x) \rho_\lambda(x) dx \quad \forall \lambda\in  \R \setminus \{0\} .
\end{equation}

The operator $\hat \varphi(\lambda)$ is a Hilbert-Schmidt operators on $L^2(\Bbb R^d)$ such that for any $f\in L^2(\Bbb R^d)$ 
\begin{equation}\notag
\hat \varphi(\lambda)f(y) = \int_\HD \varphi(x) \rho_\lambda(x)f(y)  \ dx \quad \forall \lambda\in  \R \setminus \{0\} ,
\end{equation}
and the equality is understood in $L^2$-norm sense. 
  
For any $\psi$ and $\varphi$ in $L^2(\Bbb H^d)$ and $\lambda \in \mathbb R \setminus \{0\}$, the Hilbert-Schmidt inner product $\langle  \hat \varphi(\lambda), \hat\psi(\lambda)\rangle_{\HS}$ is the trace of an operator. Indeed,
\begin{equation}\label{eq:HS}
\langle\hat\varphi(\lambda), \hat\psi(\lambda)\rangle_{\HS} = \textnormal{trace}_{L^2(\R^d)} \left(\hat\varphi (\lambda)  \hat\psi(\lambda)^*\right) .
\end{equation}
(Here, $\hat\psi(\lambda)^*$ denotes  the $L^2(\R^d)$ adjoint of the operator $\hat\psi(\lambda)$.)  It is easy to see that $\hat\varphi (\lambda)  \hat\psi(\lambda)^*$ is a kernel operator. Thus $\langle\hat\varphi(\lambda), \hat\psi(\lambda)\rangle_{\HS}$ is trace of a kernel operator (\cite{F1995}). The Plancherel formula for the Heisenberg group is given by 
\begin{equation}\label{eq:Plancherel}
\langle \varphi, \psi\rangle_{L^2(\HD)} = \int_\R   \langle\hat\varphi(\lambda), \hat\psi(\lambda)\rangle_{\HS}  |\lambda|^d d\lambda . 
\end{equation}
The measure $|\lambda|^d d\lambda$ is the Plancherel measure on the non-zero measure class of irreducible representations of the Heisenberg group (\cite{F1995}) and $d\lambda$ is the Lebesgue measure on $\Bbb R^*$. By the periodization method, the integral in (\ref{eq:Plancherel}) is can be equivalently written as 
\begin{equation}\notag
 \int_\R   \langle\hat\varphi(\lambda), \hat\psi(\lambda)\rangle_{\HS}  |\lambda|^d d\lambda  = \int_0^1  
 \sum_{j\in \Bbb Z}  \langle\hat\varphi(\alpha+j), \hat\psi(\alpha+j)\rangle_{\HS}  |\alpha+j|^d d\alpha . 
\end{equation} 
Thus, for any $\varphi\in L^2(\Bbb H^d)$, by the Plancherel formula we deduce the following:
\begin{equation}\label{periodization}
\|\varphi\|^2 = \int_0^1  \sum_{j\in \Bbb Z}  \|\hat \varphi(\alpha+j)\|^2_{\HS}  |\alpha+j|^d d\alpha . 
\end{equation} 

\subsection{Frames for  a shift-invariant subspace} 
Let $u={\bf 1}_{[0,1]^d}\in L^2(\Bbb R^d)$ be the indicator function of the unit cube $[0,1]^d$. For $\alpha\neq 0$, define the $L^2$-unitary dilation of $u$  with respect to $\alpha$  by 
$u_\alpha(x)= |\alpha|^{-d/2}u(\alpha x)$. Let $a$ and $b$ be two real numbers such that $0\neq ab\in \Bbb Z$. Then the family consisted of translations and modulations of $u_\alpha$ by $a\Bbb Z^d$ and $b \Bbb Z^d$, respectively, is then given by 
$$ \left\{|\alpha|^{d/2}e^{-2\pi i\alpha b\langle m, x\rangle} {\bf 1}_{(0,\frac{1}{\alpha})^d}(x-an): \  \ m, n\in \Bbb Z^d\right\} .$$
It is known that  the family is  an orthonormal basis for $L^2(\Bbb R^d)$ and is called  orthonormal  Gabor or Weyl-Heisenberg basis for $L^2(\Bbb R^d)$ with the window function $u_\alpha$. 
   
Fix  $0<\epsilon<1$ (for the rest of the paper)  and  define the  projector map $\Psi_\epsilon$ from  $(0,1)$ into the class  of Hilbert-Schmidt operators of rank one on $L^2(\Bbb R^d)$  by 
$$\Psi_\epsilon(\alpha) := (u_\alpha\otimes u_\alpha) 1_{(\epsilon,1]}(\alpha),$$
where for any $f, g, h\in L^2(\Bbb R^d)$ we have $(f\otimes g)h:= \langle h, g\rangle f$. 
By the definition of the Hilbert-Schmidt norm, we then have 
\begin{align}\label{value of HS-norm}
\|\Psi_\epsilon(\alpha)\|_{\mathcal{HS}} = 1_{(\epsilon,1]}(\alpha) .  
\end{align}  
Thus $\|\Psi_\epsilon\|^2 = (d+1)^{-1}(1-\epsilon^{d+1})$. This implies that $\Psi_\epsilon\in  L^2(\R^*,\HS(L^2(\R^d)),|\lambda|^d d\lambda)$. Therefore, by the inverse Fourier transform for the Heisenberg group, there is a function in $L^2(\Bbb H^d)$ whose Fourier transform is identical to $\Psi_\epsilon$ in $L^2$-norm. We let $\psi_\epsilon$  denote this function $ L^2(\Bbb H^d)$. 
 
Let $\Bbb A$ and $\Bbb B$ be any $d\times d$ matrices in $GL(\Bbb R, d)$ such that $\Bbb A\Bbb B^t\in GL(\Bbb Z,d)$.  Define $\Gamma :=\Bbb A\Bbb Z^d \times \Bbb B \Bbb Z^d\times\Bbb Z$. Then  $\Gamma$ is a lattice subgroup of the Heisenberg group, a discrete and co-compact subgroup.  For any $\gamma=(p,q,t)\in \Gamma$, we denote by $T_\gamma \psi_{\epsilon}$ the $\gamma$-translation of $\psi_\epsilon$ which is given by 
$$T_\gamma \psi_{\epsilon}(x,y,z)= \psi_{\epsilon}(\gamma^{-1}(x,y,z)).$$ 
Our goal  here is to employ the current results and study the frame property of the family $\{T_\gamma\psi_{\epsilon}\}_{\gamma\in \Gamma}$ for its spanned vector space $V_{\Gamma, \psi_\epsilon}= \overline{\text{span}\{T_\gamma \psi_\epsilon: \gamma\in \Gamma\}}$. It is obvious that $V_{\Gamma, \psi_\epsilon}$ is $\Gamma$-translation-invariant subspace of $L^2(\Bbb H^d)$.

For fixed $\epsilon$, define  $w_\epsilon$ on $\Bbb R$ by
$$w_\epsilon(\alpha) = \sum_{j \in \Z} \|  \Psi_\epsilon (\alpha + j)\|^2_{\HS(L^2(\R^d))}|\alpha + j|^d.$$
The function $w_\epsilon$ is a positive and  periodic function. Let $E_{w_\epsilon}:=\{\alpha\in (0,1): \ w_\epsilon(\alpha)>0\}$. The definition of $\Psi_\epsilon$ along  (\ref{value of HS-norm}) yields the following result.
 
\begin{lemma}\label{lem:example1} 
For  any $\alpha \in E_{w_\epsilon}$
\begin{align}\label{inequality for the toy weight}
\epsilon^d \leq w_\epsilon(\alpha) \leq 1 .
\end{align}
\end{lemma}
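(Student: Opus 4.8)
The plan is to collapse $w_\epsilon$ to a completely explicit expression and then read off the bounds. The starting point is formula (\ref{value of HS-norm}), which gives $\|\Psi_\epsilon(\alpha)\|_{\HS} = 1_{(\epsilon,1]}(\alpha)$; since this quantity takes only the values $0$ and $1$, squaring leaves it unchanged, so $\|\Psi_\epsilon(\alpha)\|^2_{\HS} = 1_{(\epsilon,1]}(\alpha)$. Substituting into the definition of $w_\epsilon$ yields
$$w_\epsilon(\alpha) = \sum_{j\in\Z} 1_{(\epsilon,1]}(\alpha+j)\,|\alpha+j|^d.$$

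Next I would determine, for a fixed $\alpha \in (0,1)$, exactly which integers $j$ produce a nonzero summand. The decisive observation is that the window $(\epsilon,1]$ has length $1-\epsilon < 1$, while the points $\{\alpha+j : j\in\Z\}$ are spaced exactly one unit apart; hence at most one of them can land inside $(\epsilon,1]$, and the sum has at most one nonzero term. A direct check of the candidates confirms this: $j=-1$ puts $\alpha+j$ in $(-1,0)$ and $j=1$ puts it in $(1,2)$, both disjoint from $(\epsilon,1]$, while $|j|\ge 2$ is farther still. Thus the only possible contribution comes from $j=0$, and $\alpha+0 = \alpha$ lies in $(\epsilon,1]$ precisely when $\alpha > \epsilon$.

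It follows that $w_\epsilon(\alpha) = \alpha^d\,1_{(\epsilon,1)}(\alpha)$ on $(0,1)$, so that $E_{w_\epsilon} = (\epsilon,1)$ and $w_\epsilon(\alpha) = \alpha^d$ there. For $\alpha \in E_{w_\epsilon}$ one then has $\epsilon < \alpha < 1$, whence $\epsilon^d < \alpha^d < 1$, which gives the claimed inequalities $\epsilon^d \leq w_\epsilon(\alpha) \leq 1$.

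I do not expect a genuine obstacle: the whole content is that a window of length $1-\epsilon$ catches at most one point of a unit-spaced coset, collapsing the periodization to a single term. The only point requiring a moment's care is handling the half-open nature of $(\epsilon,1]$ consistently at the endpoints $\alpha=\epsilon$ and $\alpha=1$; restricting attention to $E_{w_\epsilon}$ discards the $\alpha\le\epsilon$ case automatically and makes the bounds immediate.
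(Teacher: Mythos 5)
Your proof is correct and takes essentially the same route as the paper, which proves the lemma simply by invoking the definition of $w_\epsilon$ together with the identity $\|\Psi_\epsilon(\alpha)\|_{\HS} = 1_{(\epsilon,1]}(\alpha)$; your argument just makes explicit the key point that the periodization collapses to the single term $j=0$, so that $E_{w_\epsilon}=(\epsilon,1)$ and $w_\epsilon(\alpha)=\alpha^d$ there. The stated bounds $\epsilon^d \leq w_\epsilon(\alpha)\leq 1$ then follow at once.
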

 
Let $k:=(0,0,k) \in \Bbb Z$ and   $T_k\psi_\epsilon$ denote the translation of $\psi_\epsilon$ at the center direction of   the Heisenberg: 
$$T_k \psi_\epsilon(p,q,t) =\psi_\epsilon (p,q,t-k), \quad (p,q,t)\in \Bbb H^d .$$

Let $\H= \overline{\text{span}\{T_k \psi_\epsilon:  \ k\in \Bbb Z\}}$ and $f\in \H$. For any  $\alpha\in (0,1)$ define 
\begin{align}\label{isometry S}
S(f)(\alpha) := 1_{E_{w_\epsilon}}(\alpha) w_\epsilon(\alpha)^{-1}   \sum_{j \in \Z} \langle \hat \psi_\epsilon (\alpha + j), \hat f(\alpha+j)\rangle_{\HS(L^2(\R^d))}|\alpha + j|^d. 
\end{align}

\begin{lemma}\label{isometry lemma}
The map $S: \H \to L_{w_\epsilon}^2(0,1)$ defined  in (\ref{isometry S})  is an unitary map. 
\end{lemma}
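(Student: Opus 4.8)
The plan is to reduce the statement to a classical fact about the trigonometric system on $(0,1)$, using the operator-valued Plancherel formula (\ref{periodization}) as the bridge. The engine of the whole argument is the behaviour of the center-translation $T_k$ under the Fourier transform (\ref{equ:Hfourier}). First I would compute, directly from the Schr\"odinger representation (\ref{definition-of-schroedinger-representation}), that translating in the center variable becomes a scalar modulation on the Fourier side: for $k=(0,0,k)$ and $\lambda\in\R^*$,
$$\widehat{T_k\psi_\epsilon}(\lambda)=e^{2\pi ik\lambda}\,\hat\psi_\epsilon(\lambda)=e^{2\pi ik\lambda}\Psi_\epsilon(\lambda).$$
This follows by the substitution $t\mapsto t+k$ in the defining integral (\ref{equ:Hfourier}) together with $\rho_\lambda(p,q,t+k)=e^{2\pi ik\lambda}\rho_\lambda(p,q,t)$. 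Since finite linear combinations $\sum_k c_kT_k\psi_\epsilon$ are dense in $\H$, this already forces every $f\in\H$ to have a Fourier transform of the \emph{fiber} form $\hat f(\lambda)=m_f(\lambda)\Psi_\epsilon(\lambda)$, where $m_f$ is a $1$-periodic scalar function (a trigonometric polynomial on the dense subspace, a general element of $L^2_{w_\epsilon}(0,1)$ in the limit).

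Next I would substitute this fiber structure into the defining formula (\ref{isometry S}). Using $m_f(\alpha+j)=m_f(\alpha)$ together with the definition of $w_\epsilon$, the sum over $j$ collapses to $w_\epsilon(\alpha)$ and cancels the normalising factor $w_\epsilon(\alpha)^{-1}$, leaving
$$S(f)(\alpha)=1_{E_{w_\epsilon}}(\alpha)\,\overline{m_f(\alpha)}.$$
In particular $S(T_k\psi_\epsilon)(\alpha)=1_{E_{w_\epsilon}}(\alpha)e^{-2\pi ik\alpha}$, so $S$ carries the center-translates of $\psi_\epsilon$ onto the exponentials. (The complex conjugate here is an artefact of the order of the slots in $\langle\cdot,\cdot\rangle_{\HS}$ and is irrelevant to everything below.) The isometry property is then a one-line periodization: applying (\ref{periodization}) to $\hat f=m_f\Psi_\epsilon$ gives
$$\|f\|^2=\int_0^1|m_f(\alpha)|^2\,w_\epsilon(\alpha)\,d\alpha=\|S(f)\|^2_{L^2_{w_\epsilon}(0,1)},$$
since $1_{E_{w_\epsilon}}=1$ almost everywhere with respect to $w_\epsilon\,d\alpha$. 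Equivalently, one may compute the Gram matrix $\langle T_k\psi_\epsilon,T_l\psi_\epsilon\rangle$, recognise it as the Fourier coefficients of $w_\epsilon$, verify the identity on the dense span, and extend by continuity.

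Finally I would establish surjectivity. From the displayed formula, the range of $S$ is exactly $\{1_{E_{w_\epsilon}}\overline{m}:m\in L^2_{w_\epsilon}(0,1)\}$, and since multiplication by $1_{E_{w_\epsilon}}$ and complex conjugation are bijections of $L^2_{w_\epsilon}(0,1)$, this is all of $L^2_{w_\epsilon}(0,1)$. Staying instead on the dense subspace, one checks that $\{e^{-2\pi ik\alpha}\}_{k\in\Z}$ is complete in $L^2_{w_\epsilon}(0,1)$: if $g$ is orthogonal to all of them, then $g\,w_\epsilon\in L^1(0,1)$ has vanishing Fourier coefficients, hence $g\,w_\epsilon=0$ a.e., so $g=0$ in $L^2_{w_\epsilon}$ because $w_\epsilon>0$ on $E_{w_\epsilon}$. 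Being a norm-preserving map with dense range, $S$ has closed range equal to the whole space, so it is onto.

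The main obstacle is the rigorous passage from the dense subspace to all of $\H$ in the first step, namely justifying the fiber representation $\hat f=m_f\Psi_\epsilon$ with a genuinely $1$-periodic multiplier $m_f\in L^2_{w_\epsilon}(0,1)$: one must ensure that the trigonometric polynomials $m_{f_n}$ converge in the weighted norm $\int_0^1|\cdot|^2w_\epsilon$ and that the limit remains periodic. Once the modulation identity and this multiplier structure are secured, both the isometry and the surjectivity are routine consequences of (\ref{periodization}) and the completeness of the exponential system. The bounds $\epsilon^d\le w_\epsilon\le1$ of Lemma \ref{lem:example1} are not needed for unitarity, but they identify the correct target space so that the frame and Riesz conclusions of Theorem \ref{th1} can subsequently be applied.
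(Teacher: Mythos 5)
Your proposal is correct, and it runs on the same two computational pillars as the paper's own proof --- the modulation identity $\widehat{T_k\psi_\epsilon}(\lambda)=e^{2\pi i k\lambda}\Psi_\epsilon(\lambda)$ and the periodization formula (\ref{periodization}) --- but it is organized differently and, in one important respect, goes further. The paper first proves the inequality $\|Sf\|_{L^2_{w_\epsilon}(0,1)}\leq\|f\|$ for every $f\in L^2(\Bbb H^d)$ by Cauchy--Schwarz, and then verifies the equality $\|Sf\|_{L^2_{w_\epsilon}(0,1)}=\|f\|$ by a direct computation on finite combinations $f=\sum_k a_k T_k\psi_\epsilon$, factoring out $\bigl|\sum_k a_k e^{-2\pi i k\alpha}\bigr|^2$ and re-summing via the definition of $w_\epsilon$; this is exactly your parenthetical ``Gram-matrix'' alternative, with the extension to all of $\H$ left implicit by density and boundedness. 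You instead package the computation in the multiplier form $\hat f=m_f\Psi_\epsilon$, $Sf=1_{E_{w_\epsilon}}\overline{m_f}$ (the bracket-map point of view of \cite{BHM14}); the obstacle you flag --- passing the fiber form from trigonometric polynomials to general $f\in\H$ --- is genuine but harmless, precisely because your alternative route (verify the isometric identity on the dense span, then extend by continuity) sidesteps it, and in that form the limit multiplier is produced by the isometry itself rather than assumed. The substantive difference is surjectivity: the paper's proof stops after the isometry computation and never shows that $S$ is onto, even though the lemma asserts unitarity, so on this point your proposal actually completes the paper's argument. Your argument is sound: the range contains the functions $1_{E_{w_\epsilon}}e^{-2\pi ik\alpha}$, these are complete in $L^2_{w_\epsilon}(0,1)$ because a $g$ orthogonal to all of them has $g\,w_\epsilon\in L^1(0,1)$ with vanishing Fourier coefficients, hence $g\,w_\epsilon=0$ a.e.\ and so $g=0$ in $L^2_{w_\epsilon}(0,1)$ (the values of $g$ off $E_{w_\epsilon}$ being irrelevant to the weighted norm), and an isometry with dense range has closed, hence full, range. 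Note also that your first range description (that $\{1_{E_{w_\epsilon}}\overline m:\ m\in L^2_{w_\epsilon}(0,1)\}$ exhausts the target) itself rests on this same completeness fact, so the closed-range formulation is the one to keep. In short: same engine, cleaner structural packaging, and you supply the missing half of ``unitary.''
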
 
\begin{proof}   
First we prove that $S$ is a bounded map on $L^2(\Bbb H^d)$. Let $f\in L^2(\Bbb H^d)$. 
Then 
\begin{align}\notag
& \int_0^1 |Sf(\alpha)|^2 w_\epsilon(\alpha)d\alpha \\\notag
& = \int_0^1  1_{E_{w_\epsilon}}(\alpha) w_\epsilon(\alpha)^{-2} \left(\sum_{j \in \Z} |\langle \hat \psi_\epsilon (\alpha + j), \hat f(\alpha+j)\rangle_{\HS(L^2(\R^d))}||\alpha + j|^d \right)^2 w_\epsilon(\alpha) d\alpha \\\notag
& \leq \int_0^1  1_{E_{w_\epsilon}}(\alpha)   w_\epsilon(\alpha)^{-1} \left(\sum_{j \in \Z} \|\hat \psi_\epsilon (\alpha + j)\|_{\HS} \|\hat f(\alpha+j)\|_{\HS} |\alpha + j|^d \right)^2 d\alpha \\\notag
&\leq \int_0^1  1_{E_{w_\epsilon}}(\alpha) w_\epsilon(\alpha)^{-1} B_{\psi_\epsilon}(\alpha) B_f(\alpha)   d\alpha\\\notag
&= \int_0^1 1_{E_{w_\epsilon}}(\alpha) B_f(\alpha)   d\alpha 
\end{align} 
where $B_g(\alpha) := \sum_j \|\hat g(\alpha+j)\|^2 |\alpha+j|^d$ for $g\in L^2(\Bbb H^d)$ and $\alpha\in (0,1)$. By the definition of $w_\epsilon$, it is immediate that $w_\epsilon(\alpha)^{-1} B_{\psi_\epsilon}(\alpha)  = 1$ for a.e. $\alpha\in (0,1)$. 
Therefore 
\begin{align}\notag
 \int_0^1 |Sf(\alpha)|^2 w_\epsilon(\alpha)d\alpha \leq
 \int_0^1 1_{E_{w_\epsilon}}(\alpha)  B_f(\alpha)   d\alpha 
 = \| f\|^2 \quad \quad \text{by (\ref{periodization})}.
\end{align}
This proves that $S$ is a bounded operator. Next we prove that $S$ is an isometry map on $\mathcal{H}$. Assume that $f= \sum_k a_k T_k \psi_\epsilon$ for any finite linear combination of $T_k\psi_\epsilon$, $k\in\Bbb Z$. Thus 
\begin{align}\notag
& \int_0^1 |Sf(\alpha)|^2 w_\epsilon(\alpha)d\alpha \\\notag
& = \int_0^1 1_{E_{w_\epsilon}}(\alpha) w_\epsilon(\alpha)^{-2} \left|\sum_{j \in \Z} \langle \hat \psi_\epsilon (\alpha + j), \hat f(\alpha+j)\rangle_{\HS(L^2(\R^d))}|\alpha + j|^d \right|^2 w_\epsilon(\alpha) d\alpha \\\notag
& =\int_0^1 1_{E_{w_\epsilon}}(\alpha) w_\epsilon(\alpha)^{-1} \left|\sum_{j \in \Z} \langle \hat \psi_\epsilon (\alpha + j), \sum_k a_k \widehat{T_k \psi_\epsilon}(\alpha+j)\rangle_{\HS(L^2(\R^d))}|\alpha + j|^d \right|^2 d\alpha \\\notag
& =\int_0^1 1_{E_{w_\epsilon}}(\alpha) w_\epsilon(\alpha)^{-1}
\left|\sum_k a_k e^{-2\pi i k \alpha}\right|^2 \left|\sum_{j \in \Z} \langle \hat \psi_\epsilon (\alpha + j), \hat\psi_\epsilon(\alpha+j)\rangle_{\HS(L^2(\R^d))}|\alpha + j|^d \right|^2 d\alpha\\\notag
& =\int_0^1  1_{E_{w_\epsilon}}(\alpha) \left|\sum_k a_k e^{-2\pi i k \alpha}\right|^2 w_\epsilon(\alpha) d\alpha \quad \text{(by the definition of $w_\epsilon$).}\\\notag
\end{align}
Notice we can write  
$$\left|\sum_k a_k e^{-2\pi i k \alpha}\right|^2 w_\epsilon(\alpha)= \sum_j \left\| \sum_k a_k \widehat{T_k \psi_\epsilon}(\alpha+j) \right\|_{\mathcal{HS}}^2 |\alpha + j|^d .$$
Applying this in above we get 
\begin{align}\notag
\int_0^1 |Sf(\alpha)|^2 w_\epsilon(\alpha)d\alpha 
&= \int_0^1  1_{E_{w_\epsilon}}(\alpha) \sum_j \left\| \sum_k a_k \widehat{T_k \psi_\epsilon}(\alpha+j)\right\|_{\mathcal{HS}}^2 |\alpha + j|^d d\alpha\\\notag
&= \int_0^1  1_{E_{w_\epsilon}}(\alpha) \sum_j \| \hat f(\alpha+j)\|_{\mathcal{HS}}^2 |\alpha + j|^d d\alpha\\\notag
&= \|f\|^2 \quad \quad \text{by (\ref{periodization})}.
\end{align}
This completes the proof of the lemma. 
\end{proof}

\begin{theorem}
For any $k\in \Bbb Z$ and $f\in \H$ define 
$$\Lambda_k(f) = \int_{E_{w_\epsilon}}  \sum_{j \in \Z} \langle \hat \psi_\epsilon (\alpha + j), \hat f(\alpha+j)\rangle_{\HS(L^2(\R^d))}|\alpha + j|^d  e^{-2\pi i \alpha k}   d\alpha .$$
Then $\Lambda_k (f) = \langle T_k \psi_{\epsilon}, f\rangle $
and $\{\Lambda_k\}_{k\in\Z}$ is a frame for $\H$ with respect to $\Bbb C$. 
\end{theorem}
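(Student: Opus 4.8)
The plan is to split the statement into two essentially independent parts: the pointwise identity $\Lambda_k(f)=\langle T_k\psi_\epsilon, f\rangle$, which is a purely representation-theoretic computation, and the frame property, which I claim follows at once from Corollary \ref{main result} once the structural form of $\Lambda_k$ is exposed. The whole argument is the Heisenberg-group analogue of Example \ref{example 1}: the unitary map $S$ of Lemma \ref{isometry lemma} plays the role of the fiberization there, the weight $w_\epsilon$ replaces $\sum_n|\hat\phi(\cdot+n)|^2$, and the exponentials $\lambda_k(\alpha):=e^{2\pi i k\alpha}$ replace the lattice exponentials $e_n$.

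For the identity, I would first establish the covariance of the group Fourier transform under translation in the center. Using $\rho_\lambda(0,0,k)=e^{2\pi i k\lambda}\,\mathrm{Id}$ together with the homomorphism property $\rho_\lambda(p,q,t)=e^{2\pi i t\lambda}\rho_\lambda(p,q,0)$, the change of variable $t\mapsto t-k$ in (\ref{equ:Hfourier}) gives $\widehat{T_k\psi_\epsilon}(\lambda)=e^{2\pi i k\lambda}\hat\psi_\epsilon(\lambda)$. Feeding this into the Plancherel formula (\ref{eq:Plancherel}) and periodizing as in (\ref{periodization}), the integer shift $e^{2\pi i k j}=1$ lets me pull the exponential out of the $j$-sum, so that $\langle T_k\psi_\epsilon, f\rangle=\int_0^1 e^{2\pi i k\alpha}\sum_{j}\langle\hat\psi_\epsilon(\alpha+j),\hat f(\alpha+j)\rangle_{\mathcal{HS}}|\alpha+j|^d\,d\alpha$, which is exactly the defining integral for $\Lambda_k$. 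The only care needed is the bookkeeping of the conjugation in the Hilbert--Schmidt inner product and the orientation of the exponential; this fixes whether one lands on $\Lambda_k$ or $\Lambda_{-k}$, a relabeling that is immaterial for the family $\{\Lambda_k\}_{k\in\mathbb{Z}}$ and hence for the frame conclusion.

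For the frame property, I would rewrite $\Lambda_k$ through $S$. By the definition (\ref{isometry S}) of $S$, on $E_{w_\epsilon}$ one has $\sum_{j}\langle\hat\psi_\epsilon(\alpha+j),\hat f(\alpha+j)\rangle_{\mathcal{HS}}|\alpha+j|^d=S(f)(\alpha)\,w_\epsilon(\alpha)$, so that $\Lambda_k(f)=\int_{E_{w_\epsilon}}S(f)(\alpha)\,\overline{\lambda_k(\alpha)}\,w_\epsilon(\alpha)\,d\alpha$, precisely the form of the operators in Corollary \ref{main result}. Since $S$ is unitary (Lemma \ref{isometry lemma}), $\{\lambda_k\}_{k\in\mathbb{Z}}$ is an orthonormal basis of $L^2(0,1)$ with $|\lambda_k|\equiv1$, and $Sf$ is supported in $E_{w_\epsilon}$ where Lemma \ref{lem:example1} gives $\epsilon^d\le w_\epsilon(\alpha)\le1$, the computation in the proof of Corollary \ref{main result} applies verbatim: Plancherel yields $\sum_k|\Lambda_k(f)|^2=\int_{E_{w_\epsilon}}|S(f)(\alpha)|^2 w_\epsilon(\alpha)^2\,d\alpha$, and the two-sided bound on $w_\epsilon$ gives $\epsilon^d\|f\|^2\le\sum_k|\Lambda_k(f)|^2\le\|f\|^2$. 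Hence $\{\Lambda_k\}_{k\in\mathbb{Z}}$ is a frame for $\H$ with bounds $\epsilon^d$ and $1$.

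I expect the representation-theoretic covariance $\widehat{T_k\psi_\epsilon}(\lambda)=e^{2\pi i k\lambda}\hat\psi_\epsilon(\lambda)$ to be the only genuinely new step; everything else is an application of the already-proved Corollary. A secondary, purely technical point is that $E_{w_\epsilon}$ has measure $1-\epsilon<1$, so the exponentials are not orthonormal on $E_{w_\epsilon}$ itself. This is handled exactly as in Example \ref{example 1}, by working on the full interval $(0,1)$, where $\{\lambda_k\}$ is an orthonormal basis and the vanishing of $w_\epsilon$ off $E_{w_\epsilon}$ renders the restriction harmless.
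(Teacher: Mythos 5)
Your proposal is correct and follows essentially the same route as the paper: the paper's own proof is a two-line appeal to the Parseval/Plancherel identity for $\Lambda_k(f)=\langle T_k\psi_\epsilon,f\rangle$ and to Lemmas \ref{lem:example1}, \ref{isometry lemma} together with Theorem \ref{th1}(b) for the frame property, which is exactly the reduction you carry out via the scalar case, Corollary \ref{main result}. Your write-up is in fact more careful than the paper on the two points it glosses over, namely the explicit covariance $\widehat{T_k\psi_\epsilon}(\lambda)=e^{2\pi i k\lambda}\hat\psi_\epsilon(\lambda)$ (with the attendant $k\mapsto -k$/conjugation bookkeeping) and the fact that $w_\epsilon$ vanishes off $E_{w_\epsilon}$, so that the two-sided bound of Lemma \ref{lem:example1} is needed, and available, only on the support of $S(f)$.
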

\begin{proof} 
The equation $\Lambda_k (f)= \langle T_k \psi_{\epsilon}, f\rangle $ is a result of  the Parseval identity. The family $\{\Lambda_k\}_{k\in\Z}$  is a frame for $\H$ with respect to $\Bbb C$ by  Lemmas \ref{lem:example1}, \ref{isometry lemma} and Theorem \ref{th1} (b). 
\end{proof}

\section*{Acknowledgments}
The author is deeply indebted to Dr. Azita Mayeli for several fruitful discussions and generous comments. The author wishes to thank the anonymous referees for their helpful comments and suggestions that helped to improve the quality of the paper.

\end{document}